\newtheorem{theorem}{Theorem}[section]
\newtheorem{remark}{Remark}
\newtheorem{lemma}[theorem]{Lemma}
\newtheorem{assumption}{Assumption}[section]
\newcommand{\norm}[1]{\left\lVert#1\right\rVert}
\newcommand{\R}{\mathbb{R}}
\begin{document}
\begin{frontmatter}
\title{Convergence of physics-informed neural networks applied to linear second-order elliptic interface problems}
\author[cas,ucas]{Sidi Wu}
\author[cas,ucas]{Aiqing Zhu}
\author[cas,ucas]{Yifa Tang}
\author[cas,ucas]{Benzhuo Lu\corref{cor}}
\cortext[cor]{Corresponding author: bzlu@lsec.cc.ac.cn (Benzhuo Lu)}

\address[cas]{State Key Laboratory of Scientific and Engineering Computing, National Center for Mathematics and Interdisciplinary Sciences, Academy of Mathematics and Systems Science, Chinese Academy of Sciences, Beijing 100190, China}
\address[ucas]{School of Mathematical Sciences, University of Chinese Academy of Sciences, Beijing 100049, China}

\begin{abstract}
With the remarkable empirical success of neural networks across diverse scientific disciplines, rigorous error and convergence analysis are also being developed and enriched. However, there has been little theoretical work focusing on neural networks in solving interface problems. In this paper, we perform a convergence analysis of physics-informed neural networks (PINNs) for solving second-order elliptic interface problems. Specifically, we consider PINNs with domain decomposition technologies and introduce gradient-enhanced strategies on the interfaces to deal with boundary and interface jump conditions. It is shown that the neural network sequence obtained by minimizing a Lipschitz regularized loss function converges to the unique solution to the interface problem in $H^2$ as the number of samples increases. Numerical experiments are provided to demonstrate our theoretical analysis.
\end{abstract}

\begin{keyword}
Elliptic interface problems; Generalization  errors; Convergence analysis; Neural networks.
\end{keyword}
\end{frontmatter}

\section{Introduction}
Deep learning in the form of deep neural networks (DNNs) has been effectively used in diverse scientific disciplines beyond its traditional applications. In particular, thanks to their potential nonlinear approximation power \cite{pinkus1999approximation,darbon2021some,zhu2022approximation}, DNNs are being exploited to construct alternative approaches for solving partial differential equations (PDEs), e.g., the deep Ritz method (DRM) \cite{weinan2017deep} and physics-informed neural networks (PINNs) \cite{raissi2019physics}. The key idea of these methods is to reformulate the solution to a PDE with a closed-form expression in the form of a neural network, the parameters of which are obtained by minimizing a physics-informed loss given by the corresponding PDE. The original works on the use of neural networks to solve PDEs were proposed in the $1990s$ \cite{lee1990neural,lagaris1998artificial}, and this idea has recently been revisited with the renaissance of neural networks and the development of deep learning techniques; see e.g., \cite{rudd2015constrained,pang2019fpinns,zang2020weak,wang2021learning,zhu2019physics} and references therein.

Elliptic interface problems are a widespread class of problems in scientific computing with many applications across diverse fields; see e.g. \cite{hou1997hybrid,lu2008recent,liu2020moment,wang2019petrov}. There are many accurate and efficient numerical methods in the literature for interface problems, such as the finite element method (FEM) \cite{chen2020bilinear,ji2018finite}, the discontinuous Galerkin method (DG) \cite{massjung2012unfitted,saye2019efficient}, the immersed interface method (IIM) \cite{leveque1994immersed,hou2005numerical}, the immersed boundary method (IBM) \cite{peskin2002immersed}, the boundary element method (BEM) \cite{lu2008recent}, and the voronoi interface method (VIM) \cite{guittet2015solving}. In the last few decades, the numerical methods for solving interface problems have reached a certain maturity and made satisfactory progress. However, the above-mentioned methods usually require either a body-fitted or unfitted mesh to treat the interface problems, and the main difficulty lies in the body-fitted mesh generation or in the technique designed to dissect the intersecting geometry of the interface and properly discretize interface conditions. Interface problems are still challenging due to the low global regularity and irregular geometry of interfaces.

In recent years, many efforts have been made to use neural networks to solve interface problems since these methods are meshfree and can take advantage of deep learning techniques such as automatic differentiation and GPU acceleration.  In particular, neural network-based approaches exhibit notable advantages in treating high-dimensional problems, inverse problems, and simultaneously solving parametric PDE problems that involve learning the solution operator (operator learning), which issues also exist in interface problems. In addition, the use of multiple neural networks based on the domain decomposition method (DDM) has attracted increasing attention as they are more accurate and flexible in dealing with the interface and have shown remarkable success in various interface problems \cite{jagtap2020extended,li2020deep,he2020mesh,wu2022interface}. This idea is further studied from the numerical aspect in our previous work \cite{wu2022interface}, where the proposed interfaced neural networks are able to balance the interplay between different terms in the composite loss function and improve the performance in terms of accuracy and robustness. The above-mentioned works focus on obtaining empirical results, whereas we focus on theoretical aspects such as the  convergence of PINNs for solving interface problems in this paper.

Along with the remarkable empirical achievements of deep learning methods, rigorous error and convergence analysis are also being developed and enriched. In previous work \cite{shin2020on}, the H\"older continuity constant was used to obtain the generalization analysis of PINNs in the case of linear second-order elliptic and parabolic type PDEs. \cite{mishra2021estimates, mishra2022estimates} used quadrature points in the formulation of the loss and carried out an a-posteriori-type generalization error analysis of PINNs for both forward and inverse problems. \cite{shin2020error} studied linear PDEs and proved both a priori and posterior estimates for PINNs and variational PINNs in Sobolev spaces. \cite{hu2021extended} provided a theoretical understanding of the generalization abilities of PINNs and Extended PINNs (XPINNs) \cite{jagtap2020extended}. \cite{luo2020two} derived an a priori generalization estimate for a class of second-order linear PDEs in the context of two-layer neural networks by assuming that the exact solutions of PDEs belong to a Barron-type space \cite{barron1993universal}.  \cite{jiao2021convergence} provided a nonasymptotic convergence rate of PINNs with $\text{ReLU}^3$ networks for the second linear elliptic equation. For high-dimensional PDEs,  \cite{lu2021priori} derived a priori and dimension explicit generalization error estimates for the DRM under the assumption that the solutions of the PDEs lie in the spectral Barron space, and \cite{berner2020analysis} provided an analysis of the generalization error for linear Kolmogorov equations by using tools from statistical learning theory and covering number estimates of neural network hypothesis classes.

However, the majority of existing theoretical works are limited to differential equations with continuous coefficients; much less is known about the convergence of PINNs in solving interface problems, where the interaction at the interface introduces additional analytical challenges. In particular, it is reasonable to make the assumption that the network satisfies the boundary conditions for elliptic problems (see, for example, Theorem 3.4 in \cite{shin2020on}), since there are several approaches \cite{lagaris1998artificial,lagari2020systematic} to forcing neural networks to obey the boundary conditions intrinsically. But such approaches cannot be applied to interface jump conditions.
When considering the convergence of interface problems, we are inevitably faced with the challenge of estimating errors caused by interface losses.

By extending the convergence results in \cite{shin2020on} to elliptic interface problems, we provide a convergence theory for DDM-based PINNs to solve linear second-order elliptic interface problems. In this work, to deal with the error caused by the non-zero interface and boundary losses, we introduce a gradient enhancement strategy on interfaces inspired by \cite{yu2022gradient}, where the gradient information from the residual of the boundary and interface jump conditions is embedded into the loss function. Following the work of Shin $et \ al.$ \cite{shin2020on}, we construct a specific Lipschitz regularization loss tailored for elliptic interface problems to quantify the generalization of PINN. Finally, we prove that the sequence of minimizers of the designed regularized loss function converges to the unique solution to the interface problem in $H^2$ under some reasonable assumptions. To the best of our knowledge, this is the first theoretical work that proves the convergence of neural network methods for solving elliptic interface problems. The main contributions of our work can be summarized as follows:
\begin{itemize}
    \item We introduce gradient-enhanced strategies on interfaces to estimate the error caused by non-zero interface and boundary losses.
    \item We first provide the convergence analysis of PINNs in solving elliptic interface problems.
    \item We present several numerical experiments to validate the theoretical analysis.
\end{itemize}

The rest of this paper is organized as follows. In Section \ref{Preliminaries}, some preliminaries, including notations and background knowledge of neural networks and interface problems, are introduced. In Section \ref{NN4interface problem}, we briefly introduce the algorithm of PINN for solving elliptic interface problems and present the gradient-enhanced strategies on the interfaces. In Section \ref{main results}, we present a convergence analysis, the proof of which is presented in Section \ref{proofs}. Numerical experiments are performed in Section \ref{numerical examples} to validate the theoretical analysis. Finally, we conclude the paper in Section \ref{summary}.

\section{Preliminaries\label{Preliminaries}}
\subsection{Notations}
We first introduce some notations. Let $\mathbf{x}=(x_1,\ldots,x_d)$ be a point in $\mathbb{R}^d$ ($d\geq 2$) and $\mathcal{U} \subset \mathbb{R}^d$ be an open set. Let $C(\mathcal{U}) = \left\{ f: \mathcal{U} \to \mathbb{R}^{d'} | f \text{ is continuous} \right\}$ denotes the space of continuous functions. Let $\mathbb{Z}_+^d$ denotes the lattice of $d$-dimensional nonnegative integers. For $\mathbf{k} = (k_1, \dots, k_d) \in \mathbb{Z}_+^d$, we set $|\mathbf{k}|:= k_1+\dots +k_d$, and $$D^{\mathbf{k}} \:= \frac{\partial^{|\mathbf{k}|}}{\partial x_1^{k_1}\dots \partial x_d^{k_d}}.$$
For a positive integer $k$, we define
\begin{equation*}
C^k(\mathcal{U}):=\{f: D^{\mathbf{k}}f\in C(\mathcal{U}) \text{ for all}\ |\mathbf{k}|\leq k \}.
\end{equation*}
Subsequently, we denote $\big[\mu \big]_{\mathcal{U}}$ to be the Lipshcitz constant of $\mu$ on $\mathcal{U}$, i.e.,
\begin{equation*}
\big[\mu \big]_{\mathcal{U}} = \sup_{\mathbf{x}, \mathbf{y} \in \mathcal{U}, \mathbf{x}\neq \mathbf{y}} \frac{\norm{\mu(\mathbf{x}) - \mu(\mathbf{y})}_{\infty}}{\norm{\mathbf{x}-\mathbf{y}}_{\infty}}.
\end{equation*}
In order to distinguish from the $k$-times continuously differentiable function space $C^{k}$, we denote $C^{k,L}(\mathcal{U})$ to be the collection of functions in $C^{k}$ whose derivatives of order $k$ are Lipschitz continuous.

Following \cite{lions2012non}, we present the definition of $H^s(E)$ on the boundary $E$. Here, we suppose $E$ is a $(d-1)$-dimensional smooth manifold, i.e., there exists a collection of charts $\{(\mathcal{V}_i,\phi_i)\ | \ i\in I\}$ such that $\{\mathcal{V}_i\}_{i\in I}$ is a collection of open sets on $E$ and covers $E$ (i.e., $E=\cup_{i\in I}\mathcal{V}_i$), and such
that $\phi_i$ is homeomorphism from $\mathcal{V}_i$ to an open subset $\mathcal{V}_i':=\phi_i(\mathcal{V}_i)$ of $\mathbb{R}^{d-1}$ for all $i \in I$ and the transition map $\phi_i\circ\phi_j^{-1}: \phi_j(\mathcal{V}_i\cap\mathcal{V}_j)\rightarrow\phi_i(\mathcal{V}_i\cap\mathcal{V}_j)$ is an infinitely differentiable mapping when $\mathcal{V}_i\cap\mathcal{V}_j\neq\emptyset$ for all $i, j \in I$ . Let $\{\eta_i\}_{i\in I}$ be a partition of unity on $E$ with compact support in $\mathcal{V}_i$ such
that $\sum_i\eta_i(\mathbf{x})=1$ for all $\mathbf{x}\in E$ and $\eta_i$ are infinitely differentiable. Then, if $u$ is a function on $E$, we can decompose $u=\sum_i(\eta_iu)$, and define
\begin{equation*}
\phi_i^*(\eta_iu)(\boldsymbol{\xi})=(\eta_iu)(\phi_i^{-1}(\boldsymbol{\xi})),\ \boldsymbol{\xi}\in\mathcal{V}_i'.
\end{equation*}
Finally, we define
\begin{equation*}\label{eq: Hs}
H^s(E)=\{u\ | \ \phi_i^*(\eta_iu)\in H^s(\mathcal{V}'_i), \forall i\in I\},
\end{equation*}
with norm
\begin{equation}\label{eq: Hs norm}
\norm{u}_{H^s(E)}=\left(\sum_{i}\norm{\phi_i^*(\eta_iu)}^2_{H^s(\mathcal{V}'_i)}\right)^\frac{1}{2}.
\end{equation}
It is easy to verify that $H^s(E)$ is a Hilbert space and that the different norms \eqref{eq: Hs norm} are equivalent. We refer the readers to \cite{lions2012non} for more details.

For given $\{\mathcal{V}_i,\phi_i,\eta_i\}_{i\in I}$ and $f\in C^2(E)$, we define that if dimension $d=2$,
\begin{equation*}
D_Ef=\sum_{i}\frac{\partial \phi_i^*(\eta_iu)}{\partial \xi_1}, \quad D_E^2f=\sum_{i}\frac{\partial^2 \phi_i^*(\eta_iu)}{\partial \xi_1^2},
\end{equation*}
and if dimension $d=3$,
\begin{equation*}
D_Ef=\left(\sum_{i}\frac{\partial \phi_i^*(\eta_iu)}{\partial \xi_1},\sum_{i}\frac{\partial \phi_i^*(\eta_iu)}{\partial \xi_2}\right), D_E^2f=\left(\sum_{i}\frac{\partial^2 \phi_i^*(\eta_iu)}{\partial \xi_1^2},\sum_{i}\frac{\partial^2 \phi_i^*(\eta_iu)}{\partial \xi_1\partial \xi_2},\sum_{i}\frac{\partial^2 \phi_i^*(\eta_iu)}{\partial \xi_2^2}\right),
\end{equation*}
where $\xi_j$ is the $j$-th component of $\boldsymbol{\xi}$.

\subsection{Neural networks}
In addition, we introduce the employed network architecture, i.e., the feed-forward neural network (FNN), in this paper. Mathematically, an $N$-layer FNN is a nested composition of sequential linear functions and nonlinear activation functions, which takes the form
\begin{equation*}\label{eq:nn}
    \begin{aligned}
    &\mathbf{s}_{i}=f_i(\mathbf{s}_{i-1}):=\sigma (\mathbf{W}_i\mathbf{s}_{i-1}+\mathbf{b}_i) ,\ \text{for}\ i=1, \cdots, N-1, \\
    &\mathbf{s}_{N}=f_N(\mathbf{s}_{N-1}):= \mathbf{W}_N\mathbf{s}_{N-1}+\mathbf{b}_N,
    \end{aligned}
\end{equation*}
where $\mathbf{s}_0=\mathbf{x}\in \mathbb{R}^{d_{in}}$ is the input variable, $\mathbf{s}_i\in \mathbb{R}^{d_i}$ denotes the output of the $i$-th hidden layer,  $\mathbf{s}_{N} \in \mathbb{R}^{d_{out}}$ is the corresponding output, and $\mathbf{W}_i\in \mathbb{R}^{d_{i+1}\times d_i}$ and $\mathbf{b}_i \in \mathbb{R}^{d_{i+1}}$ are trainable parameters. $\sigma: \R\rightarrow \R$ is the nonlinear activation function applied element-wise to a vector. Popular examples include the rectified linear unit (ReLU) $\text{ReLU}(z) = \max(0, z)$, the logistic sigmoid $\text{Sig}(z) = 1/(1 + e^{-z})$ and the hyperbolic tangent $\text{tanh}(z)=(e^{z}-e^{-z})/(e^{z}+e^{-z})$. Equipped with those definitions, the FNN representation of a continuous function can be viewed as
\begin{equation}
\label{eq: nn expression}
\mathcal{NN}(\mathbf{x})=f_N\circ \cdots \circ f_{1}(\mathbf{x}).
\end{equation}
Furthermore, we denote all the trainable parameters (e.g., $\mathbf{W}_i$, $\mathbf{b}_i$) in \eqref{eq: nn expression} as $\boldsymbol{\theta}\in\Theta$, where $\boldsymbol{\theta}$ is a high-dimensional vector and $\Theta$ is the space of $\boldsymbol{\theta}$. Given a network architecture $\bm{\overrightarrow{n}}$ (e.g., the number of layers and the width of each hidden layer), we denote the set of all expressible functions (hypothesis space) as
\begin{equation}\label{def:fcnn}
\mathcal{H}_{\bm{\overrightarrow{n}}}^\text{NN}=\{\mathcal{NN}(\cdot;\bm{\overrightarrow{n}},\boldsymbol{\theta}): \mathbb{R}^{d_{in}}\rightarrow\mathbb{R}^{d_{out}}\big|\boldsymbol{\theta}\in\Theta\}.
\end{equation}

\subsection{Elliptic interface problems}

Let $\Omega=\Omega_1\cup\Omega_2$ be a bounded domain in $\mathbb{R}^d$ with smooth boundary $\partial \Omega$. Let $\Omega_1\subset\Omega$ be an open domain with smooth boundary $\Gamma=\partial\Omega_1\subset\Omega$ and $\Omega_2=\Omega\setminus\Omega_1$ (see Fig. \ref{fig: doamin} for an illustration). We consider the following linear second-order elliptic interface problem
\begin{subequations}  \label{eq:interface problem}
\begin{align}
-\nabla\cdot(a_i\nabla u)+b_i u &= f_i,\quad \text{in} \ \Omega_i, \ i=1,2, \label{eq:1A}\\
 \llbracket a\nabla u\cdot \mathbf{n} \rrbracket &=\psi, \quad  \text{on} \ \Gamma \label{eq:1B}, \\
\llbracket u\rrbracket &=\varphi,\quad  \text{on} \ \Gamma \label{eq:1C},\\
u&=g,\quad \text{on} \ \partial\Omega \label{eq:1D},
\end{align}
\end{subequations}
where $\llbracket\mu\rrbracket:= \mu |_{\Omega_2}-\mu |_{\Omega_1}$ denotes the jump of a quantity $\mu$ across $\Gamma$, and $\mathbf{n}$ denotes the unit outward normal of $\Omega_1$. The coefficients
\begin{equation*}
a(\mathbf{x}) = \left\{\begin{aligned}
&a_1(\mathbf{x}),\quad \text{if}\ \mathbf{x} \in \Omega_1\\
&a_2(\mathbf{x}),\quad \text{if}\ \mathbf{x} \in \Omega_2\\
\end{aligned}\right., \quad
b(\mathbf{x}) = \left\{\begin{aligned}
&b_1(\mathbf{x}),\quad \text{if}\ \mathbf{x} \in \Omega_1\\
&b_2(\mathbf{x}),\quad \text{if}\ \mathbf{x} \in \Omega_2\\
\end{aligned}\right.
\end{equation*}
are piecewise spatial functions. The unknown part of this problem is the exact solution $u^*$, while others are given in advance.
\begin{figure}[htbp]
	\centering
	\scalebox{0.9}{\includegraphics{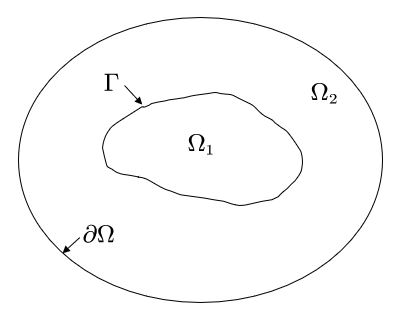}}
	\caption{A schematic view of the geometry description.}
\label{fig: doamin}
\end{figure}

\section{Neural network methods for linear second-order elliptic interface problems}\label{NN4interface problem}

In this section, we present a brief overview of physics-informed neural networks (PINNs) \cite{raissi2019physics} for solving linear second-order elliptic interface problems. Since the physical solutions to elliptic interface problems are usually non-smooth or even discontinuous across the interface, it is natural to use domain decomposition methods (DDMs) in the PINN framework \cite{jagtap2020extended,li2020deep,he2020mesh,wu2022interface}. In the context of DDM-based deep learning methods, the computational domain is decomposed into several disjoint subdomains according to the interface, and the solution to the interface problem is the combination and ensemble of multiple local networks, where each of them is responsible for prediction in one subdomain.

Under the PINN framework, we approximate the latent solution to interface problems by two FNNs, i.e., $u_1(\mathbf{x},\boldsymbol{\theta}_1)\big|_{\Omega_1}$ and $u_2(\mathbf{x},\boldsymbol{\theta}_2)\big|_{\Omega_2}$. Let $\boldsymbol{\theta}=(\boldsymbol{\theta}_1, \boldsymbol{\theta}_2)$ denotes all tunable parameters of the networks (e.g., weights and biases). We then use the constraints implied by Eq. \eqref{eq:interface problem} and the boundary and interface jump conditions to train the networks. Let us denote the number of training data points by $\bm{m} = (m_{r_1}, m_{r_2}, m_{\Gamma}, m_{b})$, where $m_{r_1}, m_{r_2}, m_{\Gamma}$ and $m_b$ represent the number of training samples in $\Omega_1$, $\Omega_2$, $\Gamma$ and $\partial\Omega$, respectively.  Then, a physics-informed model can be trained by minimizing the following composite empirical loss function
\begin{equation} \label{def: vanilla PINN empirical loss}
\begin{aligned}
\text{Loss}^{\text{PINN}}_{\bm{m}}(u_1,u_2;\bm{\lambda})
:=& \lambda_{r_1}\mathcal{M}_{\Omega_1}
+ \lambda_{r_2}\mathcal{M}_{\Omega_2}
+ \lambda_{b}\mathcal{M}_{b}+\lambda_{\Gamma_D}\mathcal{M}_{\Gamma_D}  +\lambda_{\Gamma_N}\mathcal{M}_{\Gamma_N},
\end{aligned}
\end{equation}
where $\bm{\lambda} = (\lambda_{r_1}, \lambda_{r_2}, \lambda_{\Gamma} ,\lambda_{b})\geq 0$ (element-wise inequality), the loss terms $\mathcal{M}_{\Omega_1}$ and $\mathcal{M}_{\Omega_2}$ correspond to the PDE residuals (\ref{eq:1A}) in $\Omega_1$ and $\Omega_2$, $\mathcal{M}_{
b}$, $\mathcal{M}_{\Gamma_D}$ and $\mathcal{M}_{\Gamma_N}$ enforce the boundary condition (\ref{eq:1D}), interface jump conditions (\ref{eq:1C}) and (\ref{eq:1B}), respectively. For a typical interface problem \eqref{eq:interface problem}, we define
\begin{equation*}
\begin{aligned}
&\mathcal{L}_i[u_i] = -\nabla\cdot(a_i\nabla u_i)+b_i u_i, \text{with } i=1,2,
\quad
\mathcal{B}[u_2] = u_2,
\\
&\mathcal{I}_D[u_1, u_2] = u_2-u_1,
\quad
\mathcal{I}_N[u_1, u_2] = a_2\nabla u_2\cdot \mathbf{n} - a_1\nabla u_1\cdot \mathbf{n},
\end{aligned}
\end{equation*}
which can be derived by automatic differentiation \cite{baydin2017automatic}.
Then, the loss terms in $\text{Loss}_{\mathbf{m}}^{\text{PINN}}$ \eqref{def: vanilla PINN empirical loss} would take the specific form
\begin{equation*}
\begin{aligned}
&\mathcal{M}_{\Omega_1} =\frac{1}{m_{r_1}} \sum_{i=1}^{m_{r_1}} \left|\mathcal{L}_1[u_1](\mathbf{x}_{r_1}^i)- f_1(\mathbf{x}_{r_1}^i)\right|^2,
\
\mathcal{M}_{\Omega_2} =\frac{1}{m_{r_2}}\sum_{i=1}^{m_{r_2}} \left|\mathcal{L}_2[u_2](\mathbf{x}_{r_2}^i)- f_2(\mathbf{x}_{r_2}^i)\right|^2,
\\
&\mathcal{M}_{b}=\frac{1}{m_{b}}\sum_{i=1}^{m_{b}}\norm{\mathcal{B}[u_2](\mathbf{x}_{b}^i) - g(\mathbf{x}_{b}^i)}_2^2, \
\mathcal{M}_{\Gamma_D} = \frac{1}{m_{\Gamma}}\sum_{i=1}^{m_{\Gamma}}
\norm{\mathcal{I}_D[u_1, u_2](\mathbf{x}_{\Gamma}^i) - \varphi(\mathbf{x}_{\Gamma}^i)}_2^2,
\\
&\mathcal{M}_{\Gamma_N} = \frac{1}{m_{\Gamma}}\sum_{i=1}^{m_{\Gamma}}
\norm{\mathcal{I}_N[u_1, u_2](\mathbf{x}_{\Gamma}^i)- \psi(\mathbf{x}_{\Gamma}^i)}_2^2,
\end{aligned}
\end{equation*}
where $\{\mathbf{x}_{b}^i\}_{i=1}^{m_{b}}:=\mathcal{T}_{b}^{m_{b}}$ denotes the boundary data points,  $\{\mathbf{x}_{\Gamma}^i\}_{i=1}^{m_{\Gamma}}:=\mathcal{T}_{\Gamma}^{m_{\Gamma}}$ denotes the interface data points, and  $\{\mathbf{x}_{r_j}^i\}_{i=1}^{m_{r_j}}:= \mathcal{T}_{r_j}^{m_{r_j}}$ denotes the training data points that are randomly placed insider the subdomain $\Omega_j$ with $j=1,2$. Here, we suppose these four types of data sets are independently and identically (iid) sampled from probability distributions $\mu_{r_1}$, $\mu_{r_2}$, $\mu_{\Gamma}$ and $\mu_{b}$, respectively.

In PINNs, we only enforce the residual of boundary and interface jump conditions to be zero, while in this work, we introduce gradient-enhanced strategies to the PINN framework to estimate the error caused by non-zero boundary and interface losses.
Specifically, the high-order gradient information of the interface(s) is embedded into the loss function by redefining the following loss terms,
\begin{equation*}
\begin{aligned}
&\mathcal{M}_{b}=\frac{1}{m_{b}}\sum_{i=1}^{m_{b}}\norm{\mathcal{B}[u_2](\mathbf{x}_{b}^i) - \bm{g}(\mathbf{x}_{b}^i)}_2^2,\
\mathcal{M}_{\Gamma_D} = \frac{1}{m_{\Gamma}}\sum_{i=1}^{m_{\Gamma}}
\norm{\mathcal{I}_D[u_1, u_2](\mathbf{x}_{\Gamma}^i) - \bm{\varphi}(\mathbf{x}_{\Gamma}^i)}_2^2,
\\
&\mathcal{M}_{\Gamma_N} = \frac{1}{m_{\Gamma}}\sum_{i=1}^{m_{\Gamma}}
\norm{\mathcal{I}_N[u_1, u_2](\mathbf{x}_{\Gamma}^i)- \bm{\psi}(\mathbf{x}_{\Gamma}^i)}_2^2,
\end{aligned}
\end{equation*}
where
\begin{equation*}
\begin{aligned}
&\mathcal{B}[u_2] =(u_2, D_{\partial\Omega}u_2, D_{\partial\Omega}^2u_2),\quad
\mathcal{I}_D[u_1, u_2] = ( u_2-u_1 , D_\Gamma(u_2-u_1), D_\Gamma^2(u_2-u_1)),\\
&\mathcal{I}_N[u_1, u_2] = (a_2\nabla u_2\cdot \mathbf{n} - a_1\nabla u_1\cdot \mathbf{n}, D_\Gamma(a_2\nabla u_2\cdot \mathbf{n} - a_1\nabla u_1\cdot \mathbf{n})),
\end{aligned}
\end{equation*}
and
\begin{equation*}
 \bm{\varphi} = (\varphi, D_\Gamma\varphi,    D_\Gamma^2\varphi),\quad \bm{\psi} = (\psi, D_\Gamma\psi), \quad\bm{g} = (g, D_{\partial\Omega} g, D_{\partial\Omega} ^2g).
\end{equation*}
Note that since $u^*(\mathbf{x})-g(\mathbf{x})=0$ for any $\mathbf{x}$ on the boundary, we know that for any positive integer $k$, the derivative $D_{\partial\Omega}^k(u^*-g)$ is zero. The same is true for the interface residuals. Hence, it is acceptable for us to enforce the derivatives of the residual of interface(s) to be zero.
Obviously, the gradient-enhanced empirical PINN loss is an upper bound for vanilla empirical PINN loss. Unless otherwise stated, the rest of the paper discusses gradient-enhanced empirical PINN loss $\text{Loss}_{\mathbf{m}}^{\text{PINN}}$ \eqref{def: vanilla PINN empirical loss}. In addition, motivated by the upper bound \cite{shin2020on}, we consider the Lipschitz regularized loss function
\begin{equation}\label{def: GE LIP PINN empirical loss}
\begin{aligned}
\text{Loss}_{\bm{m}}(u_1,u_2;\bm{\lambda}, \bm{\lambda}^R)
:=&\text{Loss}^{\text{PINN}}_{\bm{m}}(u_1,u_2;\bm{\lambda}) + \lambda_{r_1}^R R_{r_1}(u_1) + \lambda_{r_2}^R R_{r_2}(u_2)\\ &+\lambda_{b}^R R_{b}(u_{2})+ \lambda_{\Gamma_D}^R R_{\Gamma_D}(u_{1}, u_{2}) + \lambda_{\Gamma_N}^R R_{\Gamma_N}(u_{1}, u_{2}),
\end{aligned}
\end{equation}
where $\bm{\lambda}^R = (\lambda_{r_1}^R, \lambda_{r_2}^R, \lambda_{\Gamma}^R ,\lambda_{b}^R)\geq 0$ (element-wise inequality), and $R_{r_1}$, $R_{r_2}$, $R_{b}$, $R_{\Gamma_D}$, $R_{\Gamma_N}$ are regularization functionals. Specifically,
\begin{equation*}
\begin{aligned}
&R_{r_1}(u_1) = \big[\mathcal{L}_1[u_1]\big]^2_{\Omega_1}, \quad R_{r_2}(u_2)=\big[\mathcal{L}_2[u_2]\big]^2_{\Omega_2} , \quad
R_{b}(u_2)=\big[\mathcal{B}[u_2]\big]_{\partial \Omega}^2,\\
&R_{\Gamma_D}(u_1,u_2)=\big[\mathcal{I}_D[u_1,u_2]\big]_{\Gamma}^2, \quad R_{\Gamma_N}(u_1,u_2)=\big[\mathcal{I}_N[u_1,u_2]\big]_{\Gamma}^2.
\end{aligned}
\end{equation*}
For the convenience of following analysis, we denote the expected loss of Eq. \eqref{eq:interface problem} (when $\bm{\lambda}^R=0$) by $\text{Loss}^{\text{PINN}}(u_1,u_2;\bm{\lambda})$. More precisely,
\begin{equation}\label{eq: the expected loss}
\begin{aligned}
\text{Loss}^{\text{PINN}}(u_1,u_2;\bm{\lambda}) =&
\lambda_{r_1} \norm{\mathcal{L}_1[u_1]- f_1}^2_{L^2(\Omega_1;\mu_{r_1})}
+\lambda_{r_2} \norm{\mathcal{L}_2[u_2]- f_2}^2_{L^2(\Omega_2;\mu_{r_2})}\\
&+\lambda_{b}\norm{\mathcal{B}[u_2] - \bm{g}}^2_{L^2(\partial\Omega;\mu_{b})}
+\lambda_{\Gamma_D}\norm{\mathcal{I}_D[u_1, u_2] - \bm{\varphi}}^2_{L^2(\Gamma;\mu_{\Gamma})}\\
&+\lambda_{\Gamma_N}\norm{\mathcal{I}_N[u_1, u_2]- \bm{\psi}}^2_{L^2(\Gamma;\mu_{\Gamma})}.
\end{aligned}
\end{equation}

\begin{remark}
It is noted that the use of the gradient-enhanced strategy and Lipschitz regularization is for the convergence analysis. We will numerically verify that such technologies do not affect performance.
\end{remark}

\begin{remark}
The present paper only considers the high regularity setting and that point-wise evaluations are well-defined. Specifically, it is required that $\mathcal{L}_i[u_i]\in C(\Omega_i)$ for $i=1,2$, $\mathcal{I}_D[u_1, u_2]\in C(\Gamma)$, $\mathcal{I}_N[u_1, u_2]\in C(\Gamma)$, and $\mathcal{B}[u_2]\in C(\partial\Omega)$ for all $(u_1,u_2)\in(\mathcal{H}_{1,\bm{m}},\mathcal{H}_{2,\bm{m}})$, and $f_i\in C(\Omega_i)$ for $i=1,2$, $\bm{\psi}\in C(\Gamma)$, $\bm{\varphi}\in C(\Gamma)$ and $\bm{g}\in C(\partial\Omega)$.
\end{remark}

\section{Main results\label{main results}}

We first present assumptions on the training data distributions based on the probability space filling arguments \cite{calder2019consistency} to guarantee that random samples drawn from probability distributions can fill up both the interior of the domains $\Omega_1$ and $\Omega_2$ as well as the boundary $\partial \Omega$ and interface $\Gamma$.
\begin{assumption}[Random sampling] \label{assumption:data-dist}
	For the interface problem \eqref{eq:interface problem}, let $\mu_{r_1}$, $\mu_{r_2}$, $\mu_{\Gamma}$ and $\mu_{b}$ be probability distributions defined on $\Omega_1$, $\Omega_2$, $\Gamma$ and $\partial \Omega$, respectively.
	Let $\rho_{r_1}(\rho_{r_2})$ be the probability density of $\mu_{r_1}(\mu_{r_2})$ with respect to $d$-dimensional Lebesgue measure on $\Omega_1(\Omega_2)$.
	Let $\rho_{\Gamma}(\rho_{b})$ be the probability density of $\mu_{\Gamma}(\mu_{b})$
	with respect to the $(d-1)$-dimensional Hausdorff measure on $\Gamma(\partial\Omega)$.
	\begin{enumerate}
		\item $\rho_{r_1}$, $\rho_{r_2}$, $\rho_{\Gamma}$ and $\rho_{b}$ are supported on $\overline{\Omega}_1$, $\overline{\Omega}_2$, $\Gamma$ and $\partial\Omega$, respectively.
		Also, $\inf_{\Omega_1} \rho_{r_1} > 0$, $\inf_{\Omega_2} \rho_{r_2} > 0$, $\inf_{\Gamma} \rho_{\Gamma} > 0$, and $\inf_{\partial\Omega} \rho_{b} > 0$.
		\item
		For $\epsilon > 0$, there exists partitions of $\Omega_1$, $\Omega_2$, $\Gamma$ and $\partial \Omega$, $\{\Omega_{1,j}^\epsilon\}_{j=1}^{K_{r_1}}$, $\{\Omega_{2,j}^\epsilon\}_{j=1}^{K_{r_2}}$, $\{\Gamma_j^\epsilon\}_{j=1}^{K_{\Gamma}}$
		and $\{\partial\Omega_j^\epsilon\}_{j=1}^{K_{b}}$
		that depend on $\epsilon$
		such that
		for each $j$,
		there are cubes $H_{\epsilon}(\mathbf{z}_{j}^{r_1})$, $H_{\epsilon}(\mathbf{z}_{j}^{r_2})$, $H_{\epsilon}(\mathbf{z}_{j}^\Gamma)$ and
		$H_{\epsilon}(\mathbf{z}_{j}^b)$ of side length $\epsilon$
		centered at $\mathbf{z}_{j}^{r_1} \in \Omega_{1,j}^\epsilon$, $\mathbf{z}_{j}^{r_2} \in \Omega_{2,j}^\epsilon$, $\mathbf{z}_{j}^\Gamma\in \Gamma_j^\epsilon$
		and $\mathbf{z}_{j}^b \in \partial\Omega_j^\epsilon$, respectively,
		satisfying $\Omega_{1,j}^\epsilon \subset H_{\epsilon}(\mathbf{z}_{j}^{r_1})$, $\Omega_{2,j}^\epsilon \subset H_{\epsilon}(\mathbf{z}_{j}^{r_2})$, $\Gamma_j^\epsilon \subset H_{\epsilon}(\mathbf{z}_{j}^\Gamma)$
		and $\partial \Omega_j^\epsilon \subset H_{\epsilon}(\mathbf{z}_{j}^b)$.
		\item
		There exists positive constants $c_{r_1}, c_{r_2}, c_{\Gamma}, c_{b}$ such that
		$\forall \epsilon > 0$,
		the partitions from the above satisfy
		$c_{r_1} \epsilon^{d} \le \mu_{r_1}(\Omega_{1,j}^\epsilon)$, $c_{r_2} \epsilon^{d} \le \mu_{r_2}(\Omega_{2,j}^\epsilon)$, $c_{\Gamma} \epsilon^{d-1} \le
		\mu_{\Gamma}(\Gamma_j^\epsilon)$
		and $c_{b} \epsilon^{d-1} \le
		\mu_{b}(\partial\Omega_j^\epsilon)$ for all $j$.
		
		There exists positive constants $C_{r_1}, C_{r_2}, C_{\Gamma}, C_{b}$ such that for
		$\forall \mathbf{x}_{r_1}\in \Omega_1$, $\mathbf{x}_{r_2}\in \Omega_2$, $\mathbf{x}_\Gamma\in \Gamma$ and $\mathbf{x}_b\in \partial\Omega$, we have
		$\mu_{r_1}(B_{\epsilon}(\mathbf{x}_{r_1}) \cap \Omega_1) \le C_{r_1}\epsilon^d$, $\mu_{r_2}(B_{\epsilon}(\mathbf{x}_{r_2}) \cap \Omega_2) \le C_{r_2}\epsilon^d$, $\mu_{\Gamma}(B_\epsilon(\mathbf{x}_{\Gamma}) \cap \Gamma) \le C_{\Gamma} \epsilon^{d-1}$
		and
		$\mu_{b}(B_\epsilon(\mathbf{x}_{b}) \cap \partial\Omega) \le C_{b} \epsilon^{d-1}$
		where
		$B_\epsilon(x)$ is a closed ball of radius $\epsilon$ centered at $x$.
	
		Here $C_{r_1}, c_{r_1}$ depend only on $(\Omega_1,\mu_{r_1})$, $C_{r_2}, c_{r_2}$ depend only on $(\Omega_2,\mu_{r_2})$,   $C_{\Gamma}, c_{\Gamma}$ depend only on $(\Gamma, \mu_{\Gamma})$ and $C_{b}, c_{b}$ depend only on $(\partial\Omega, \mu_{b})$.
	\end{enumerate}
\end{assumption}
In contrast to the traditional applications of deep learning, such as image classification and natural language processing, where the data distributions are unknown and data sampling is very expensive, the aforementioned assumptions are mild and easy to satisfy when solving interface problems, as the computation domain and interface are given and the data distribution is known (e.g., the uniform probability distribution).

In addition, for the loss function \eqref{def: GE LIP PINN empirical loss} to be well-defined, we have to make some assumptions about the interface problem~\eqref{eq:interface problem} and the hypothesis space of neural networks. Here, the network architecture $\bm{\overrightarrow{n}}$ is expected to grow proportionally to the number of training samples $\bm{m}$, thus we rewrite $\mathcal{H}_{\bm{\overrightarrow{n}}}^\text{NN}$ as $\mathcal{H}_{\bm{m}}$ for simplicity.
\begin{assumption}[Interface problem and hypothesis space] \label{assumption:convergence}
	Let $\mathcal{H}_{1,\bm{m}}$ and $\mathcal{H}_{2,\bm{m}}$ be the class of neural networks defined on $\overline{\Omega}_1$ and $\overline{\Omega}_2$, respectively.
	\begin{enumerate}
	    \item Let $f_1 \in C^{0,L}(\Omega_1)$, $f_2 \in C^{0,L}(\Omega_2)$, $\psi \in C^{1,L}(\Gamma)$, $\varphi\in C^{2,L}(\Gamma)$ and $g \in C^{2,L}(\partial\Omega)$.
	    \item
	    For each $\bm{m}$,  $\mathcal{H}_{1,\bm{m}}\subset C^{2,L}(\overline{\Omega}_1)$, $\mathcal{H}_{2,\bm{m}}\subset C^{2,L}(\overline{\Omega}_2)$
	    such that
    	for any  $(u_1,u_2) \in (\mathcal{H}_{1,\bm{m}},\mathcal{H}_{2,\bm{m}})$, $\mathcal{L}_1[u_1] \in C^{0,L}(\Omega_1)$, $\mathcal{L}_2[u_2] \in C^{0,L}(\Omega_2)$, $\mathcal{I}_{\Gamma_D}[u_{1}, u_{2}]\in C^{0,L}(\Gamma)$, $\mathcal{I}_{\Gamma_N}[u_{1}, u_{2}]\in C^{0,L}(\Gamma)$
	    and $\mathcal{B}[u_2] \in C^{0,L}(\Gamma)$.
	    \item For each $\bm{m}$, $\mathcal{H}_{1,\bm{m}}$($\mathcal{H}_{2,\bm{m}}$) contains a network $\hat{u}_{1,\bm{m}}$($\hat{u}_{2,\bm{m}}$)
		satisfying
		\begin{equation*}
		\text{Loss}_{\bm{m}}^{\text{PINN}}(\hat{u}_{1,\bm{m}}, \hat{u}_{2,\bm{m}};\bm{\lambda}) =  \mathcal{O}(\max\{ m_{r_1}, m_{r_2}, m_{\Gamma}^{\frac{d}{d-1}}, m_b^{\frac{d}{d-1}}\}^{-\frac{1}{2}-\frac{1}{d}}),
		\end{equation*}
			\item and
		\begin{equation*}
		\begin{aligned}
		&\sup_{\bm{m}} \big[\mathcal{L}_1[\hat{u}_{1,\bm{m}}]\big]_{\Omega_1}<\infty,
		\quad
		\sup_{\bm{m}} \big[\mathcal{L}_2[\hat{u}_{2,\bm{m}}]\big]_{\Omega_2}<\infty,
		\quad
		\sup_{\bm{m}} \big[\mathcal{B}[\hat{u}_{2,\bm{m}}]\big]_{\partial\Omega} < \infty,
		\\
		&\sup_{\bm{m}} \big[\mathcal{I}_{\Gamma_D}[\hat{u}_{1,\bm{m}}, \hat{u}_{2,\bm{m}}]\big]_{\Gamma}<\infty,
		\quad
		\sup_{\bm{m}} \big[\mathcal{I}_{\Gamma_N}[\hat{u}_{1,\bm{m}}, \hat{u}_{2,\bm{m}}]\big]_{\Gamma}<\infty.
		\end{aligned}
		\end{equation*}
	\end{enumerate}
\end{assumption}
Popular activation functions, such as sigmoid $\text{Sig}(z)$ and $\text{tanh}(z)$, could satisfy the Lipschitz condition. It is known that FNNs can simultaneously and uniformly approximate a continuous function and various of its partial derivatives \cite{pinkus1999approximation,cybenko1989approximation,hornik1990universal,guhring2021approximation,de2021approximation,shen2021neural}. In particular, standard multi-layer FNNs with a tanh activation function are capable of approximating arbitrary functions from the Sobolev space, provided sufficiently many hidden units are available \cite{guhring2021approximation}.
Thus, the third term in Assumption~\ref{assumption:convergence} can be attained.

With these assumptions, the main result is presented as follows.
\begin{theorem} \label{thm:main-elliptic}
Suppose Assumptions \ref{assumption:data-dist} and  \ref{assumption:convergence}
hold. Let $m_{r_1}$, $m_{r_2}$, $m_b$ and $m_{\Gamma}$ be the number of iid samples from $\mu_{r_1}$, $\mu_{r_2}$, $\mu_{b}$ and $\mu_{\Gamma}$, respectively, and $m_{r_2} = \mathcal{O}(m_{r_1})$, $m_{\Gamma} = \mathcal{O}(m_{r_1}^{\frac{d-1}{d}})$, $m_{b} = \mathcal{O}(m_{r_1}^{\frac{d-1}{d}})$. Let
\begin{equation*}
C_{\bm{m}} = 3\max\{\kappa_{r_1} \sqrt{d}^{d} m_{r_1}^{\frac{1}{2}}, \kappa_{r_2} \sqrt{d}^{d} m_{r_2}^{\frac{1}{2}}, \kappa_{b} \sqrt{d}^{d-1} m_{b}^{\frac{1}{2}}, \kappa_{\Gamma} \sqrt{d}^{d-1} m_{\Gamma}^{\frac{1}{2}}\},
\end{equation*}
where $\kappa_{r_1} = \frac{C_{r_1}}{c_{r_1}}$, $\kappa_{r_2} = \frac{C_{r_2}}{c_{r_2}}$, $\kappa_{b} = \frac{C_{b}}{c_{b}}$, $\kappa_{\Gamma} = \frac{C_{\Gamma}}{c_{\Gamma}}$. Let $\bm{\hat{\lambda}}_{\bm{m}}^R = (\hat{\lambda}_{r_1,\bm{m}}^R,\hat{\lambda}_{r_2,\bm{m}}^R,\hat{\lambda}_{b,\bm{m}}^R,\hat{\lambda}_{\Gamma_D,\bm{m}}^R, \hat{\lambda}_{\Gamma_N,\bm{m}}^R)$
be a vector where
\begin{equation*}
\begin{aligned}
    &\hat{\lambda}_{r_1,\bm{m}}^R = \frac{3\lambda_{r_1} dc_{r_1}^{-\frac{2}{d}}}{C_{\bm{m}}}\cdot m_{r_1}^{-\frac{1}{d}},
    \quad
    \hat{\lambda}_{r_2,\bm{m}}^R = \frac{3\lambda_{r_2} dc_{r_2}^{-\frac{2}{d}}}{C_{\bm{m}}}\cdot m_{r_2}^{-\frac{1}{d}}, \quad
    \hat{\lambda}_{b,\bm{m}}^R = \frac{3\lambda_{b} d c_{b}^{-\frac{2}{d-1}} }{C_{\bm{m}}}\cdot m_{b}^{-\frac{1}{d-1}},\\
    &\hat{\lambda}_{\Gamma_D,\bm{m}}^R = \frac{3\lambda_{\Gamma_D} d c_{\Gamma}^{-\frac{2}{d-1}} }{C_{\bm{m}}}\cdot m_{\Gamma}^{-\frac{1}{d-1}},
    \quad \hat{\lambda}_{\Gamma_N,\bm{m}}^R = \frac{3\lambda_{\Gamma_N} d c_{\Gamma}^{-\frac{2}{d-1}} }{C_{\bm{m}}}\cdot m_{\Gamma}^{-\frac{1}{d-1}}.
    \end{aligned}
\end{equation*}
Let $\bm{\lambda}_{\bm{m}}^R$ be a vector satisfying
\begin{equation*}
\bm{\lambda}_{\bm{m}}^R \ge \bm{\hat{\lambda}}_{\bm{m}}^R, \qquad
\norm{\bm{\lambda}_{\bm{m}}^R}_\infty = \mathcal{O}(\norm{\bm{\hat{\lambda}}_{\bm{m}}^R}_\infty).
\end{equation*}
Let $(u_{1, \bm{m}}, u_{2, \bm{m}}) \in (\mathcal{H}_{1,\bm{m}},\mathcal{H}_{2,\bm{m}})$ be a minimizer of the Lipschitz regularized loss $\text{Loss}_{\bm{m}}(\cdot;\bm{\lambda}, \bm{\lambda}_{\bm{m}}^R)$ \eqref{def: GE LIP PINN empirical loss}. Then the following holds,

\begin{itemize}
\item The interface problem \eqref{eq:interface problem} has a unique solution $u^* \in H^2(\Omega_1)\cap H^2(\Omega_2)$.

\item With probability 1 over iid samples,
\begin{equation*}
\lim_{m_{r_1} \to \infty} u_{1, \bm{m}} = u^*\ \text{in } H^2(\Omega_1), \quad \lim_{m_{r_1} \to \infty} u_{2, \bm{m}} = u^*\ \text{in } H^2(\Omega_2).
\end{equation*}
\end{itemize}
\end{theorem}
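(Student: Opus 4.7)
The plan is to follow the strategy of Shin et al.\ [shin2020on] and split the proof into three parts: existence and uniqueness of $u^*$, an empirical-to-expected-loss bound, and an $H^2$-stability argument that lifts control on the expected loss to $H^2$ convergence.

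For existence and uniqueness, I would introduce a lifting $G$ that matches $g$ on $\partial\Omega$ and has jump $\varphi$ across $\Gamma$, then work with $v = u - G$ in the broken space $\{v \in L^2(\Omega): v|_{\Omega_i} \in H^1(\Omega_i),\ v|_{\partial\Omega} = 0,\ \llbracket v\rrbracket = 0\}$. The resulting bilinear form is continuous and coercive under natural sign assumptions on $a_i, b_i$, so Lax--Milgram yields a unique weak solution. Classical elliptic regularity for interface problems then gives $u^* \in H^2(\Omega_1) \cap H^2(\Omega_2)$.

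For the second step, I would combine the space-filling Assumption~\ref{assumption:data-dist} with the Lipschitz regularization. The key ingredient, essentially that of [shin2020on] but applied on each of $\Omega_1, \Omega_2, \Gamma, \partial\Omega$, is a Riemann-sum style estimate: for any Lipschitz continuous $\phi$ on $\Omega_1$,
\begin{equation*}
\int_{\Omega_1}\phi\,d\mu_{r_1} \le \frac{1}{m_{r_1}}\sum_{i=1}^{m_{r_1}}\phi(\mathbf{x}_{r_1}^i) + C\bigl[\phi\bigr]_{\Omega_1} m_{r_1}^{-1/d} + (\text{concentration term}),
\end{equation*}
with probability at least $1 - \exp(-c m_{r_1}^{1/d})$. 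Applied to the squared pointwise residuals (whose Lipschitz constants are controlled by the $R_{\ast}$ terms) and combined on the four subsets with the stated choice of $\bm{\hat\lambda}_{\bm{m}}^R$ balancing the deterministic $m^{-1/d}$ contributions, a Borel--Cantelli argument yields $\text{Loss}^{\text{PINN}}(u_1,u_2;\bm{\lambda}) \le 3\,\text{Loss}_{\bm{m}}(u_1,u_2;\bm{\lambda},\bm{\lambda}_{\bm{m}}^R)$ almost surely. Since Assumption~\ref{assumption:convergence}(3) guarantees a network $\hat{u}_{\bm{m}}$ with empirical PINN loss decaying at the claimed rate, the minimizer $(u_{1,\bm{m}}, u_{2,\bm{m}})$ has its expected PINN loss tend to zero almost surely.

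The final step converts $\text{Loss}^{\text{PINN}} \to 0$ into $H^2$ convergence. Letting $e_i := u_{i,\bm{m}} - u^*|_{\Omega_i}$, the pair $(e_1,e_2)$ satisfies a linear elliptic interface problem with forcings equal to $\mathcal{L}_i[u_{i,\bm{m}}] - f_i$ in $\Omega_i$, Dirichlet jump equal to $\mathcal{I}_D[u_{1,\bm{m}},u_{2,\bm{m}}] - \varphi$ on $\Gamma$, Neumann jump $\mathcal{I}_N[u_{1,\bm{m}},u_{2,\bm{m}}] - \psi$ on $\Gamma$, and boundary data $\mathcal{B}[u_{2,\bm{m}}] - g$ on $\partial\Omega$. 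Standard $H^2$ regularity for elliptic interface problems provides
\begin{equation*}
\norm{e_i}_{H^2(\Omega_i)}^2 \le C\bigl(\norm{F_i}_{L^2(\Omega_i)}^2 + \norm{\Phi}_{H^{3/2}(\Gamma)}^2 + \norm{\Psi}_{H^{1/2}(\Gamma)}^2 + \norm{G}_{H^{3/2}(\partial\Omega)}^2\bigr).
\end{equation*}
The Sobolev trace norms on $\Gamma$ and $\partial\Omega$ are in turn controlled by the $L^2$ norms of the vector-valued residuals $\bm{\varphi},\bm{\psi},\bm{g}$, since these include derivatives $D_\Gamma, D_\Gamma^2, D_{\partial\Omega}, D_{\partial\Omega}^2$; this is exactly why the gradient-enhanced augmentation is needed.

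The hard part, I expect, is the $H^2$ stability estimate for the interface problem with non-vanishing jump and boundary data, tracked in the correct Sobolev scale compatible with the gradient-enhanced $L^2$ loss norms. One must verify that the loss norms on $\Gamma$ and $\partial\Omega$ (which are $L^2$ norms of functions together with their tangential derivatives up to order $2$) dominate the $H^{3/2}, H^{1/2}$ trace norms needed by the stability estimate, exploiting the definitions of $D_E, D_E^2$ via the chart-based construction. A secondary technical hurdle is tuning $\bm{\lambda}_{\bm{m}}^R$ so that the concentration tail probability is summable (so Borel--Cantelli applies), yet small enough that the regularization itself vanishes in the limit and does not spoil convergence.
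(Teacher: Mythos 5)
Your proposal follows essentially the same three-step architecture as the paper: (i) well-posedness plus an $H^2$ a priori estimate for the interface problem with nonhomogeneous data (the paper's Lemma \ref{lem:bound of solu}), (ii) a space-filling/Lipschitz argument converting the empirical loss into a bound on the expected loss with the regularizers absorbing the network Lipschitz constants (Lemmas \ref{app:lemma-lip}--\ref{lem:conv-loss}), and (iii) application of the stability estimate to the residuals of the minimizer, with the gradient-enhanced $L^2$ norms on $\Gamma$ and $\partial\Omega$ dominating the required trace norms. Two remarks. First, for well-posedness you lift and invoke Lax--Milgram on a broken space, which needs sign/coercivity hypotheses on $a_i,b_i$; the paper instead builds the solution by an explicit harmonic lifting in $\Omega_1$, a biharmonic lifting in $\Omega_2$, and then cites the homogeneous-data interface theory, which is why it obtains the quantitative bound $\norm{u}_X\le C(\norm{f}_{L^2}+\norm{g}_{H^2(\partial\Omega)}+\norm{\varphi}_{H^2(\Gamma)}+\norm{\psi}_{H^1(\Gamma)})$ that is reused verbatim for the error equation. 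Second, and more substantively, your claimed bound $\text{Loss}^{\text{PINN}}\le 3\,\text{Loss}_{\bm{m}}$ with an absolute constant $3$ is not what the space-filling argument delivers: the Voronoi-cell measures satisfy only $m\,\omega^{*}\le C m\,\epsilon^{d}\sim C\,m^{1/2}$, so the correct prefactor is $C_{\bm{m}}=\mathcal{O}(m_{r_1}^{1/2})$. This is precisely why Assumption \ref{assumption:convergence}(3) demands the empirical rate $m_{r_1}^{-\frac12-\frac1d}$ (the extra $m^{-1/2}$ cancels $C_{\bm{m}}$, leaving $\mathcal{O}(m_{r_1}^{-1/d})$ for the expected loss) and why $\bm{\hat\lambda}^R_{\bm m}$ carries $C_{\bm m}$ in its denominator. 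With the constant corrected, your argument goes through; your explicit Borel--Cantelli step to upgrade the finite-$m$ probability bound to an almost-sure limit is in fact a cleaner justification than the paper's implicit one.
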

Theorem \ref{thm:main-elliptic} shows that the minimizers of the Lipschitz regularized empirical losses \eqref{def: GE LIP PINN empirical loss} converge to the unique solution to the interface problem \eqref{eq:interface problem} in $H^2$ as the number of samples increases. The proof is postponed to Section \ref{proofs}.

\begin{remark}
That $(u_{1, \bm{m}}, u_{2, \bm{m}}) \in (\mathcal{H}_{1,\bm{m}},\mathcal{H}_{2,\bm{m}})$ is a minimizer means
\begin{equation*}
\text{Loss}_{\bm{m}}(u_{1,\bm{m}},u_{2,\bm{m}};\bm{\lambda}, \bm{\lambda}_{\bm{m}}^R)\\
\leq \text{Loss}_{\bm{m}}(u_{1},u_{2};\bm{\lambda}, \bm{\lambda}_{\bm{m}}^R), \text{ for } \forall (u_{1}, u_{2}) \in (\mathcal{H}_{1,\bm{m}},\mathcal{H}_{2,\bm{m}}).
\end{equation*}
We remark that this condition can be relaxed to
\begin{equation*}
\text{Loss}_{\bm{m}}(u_{1,\bm{m}},u_{2,\bm{m}};\bm{\lambda}, \bm{\lambda}_{\bm{m}}^R)\\
\leq \text{Loss}_{\bm{m}}(\hat{u}_{1,\bm{m}},\hat{u}_{2,\bm{m}};\bm{\lambda}, \bm{\lambda}_{\bm{m}}^R), \text{ for $\hat{u}_{1,\bm{m}}$, $\hat{u}_{2,\bm{m}}$ given in Assumption \ref{assumption:convergence}.}
\end{equation*}
\end{remark}

\section{Numerical experiments\label{numerical examples}}

In this section, we present numerical evidence to verify our analysis. We limit ourselves to the idealized setting considered for the theoretical analysis and to two-dimensional (2D) interface problems for the sake of illustration. For simplicity, we refer to the results obtained by minimizing the empirical loss $\text{Loss}^{\text{PINN}}_{\bm{m}}(u_1,u_2;\bm{\lambda}=\bm{1})$ \eqref{def: vanilla PINN empirical loss} without gradient enhancement on boundary and interface, i.e., the original PINN loss \cite{raissi2019physics}, as ``PINN", to the results obtained by minimizing the gradient-enhanced PINN empirical loss \eqref{def: vanilla PINN empirical loss} as ``PINN-GE", and to the results obtained by minimizing the loss \eqref{def: GE LIP PINN empirical loss} as ``LIPR-GE". Throughout all benchmarks, we show the $L^2$ and $H^2$ convergence of the trained neural networks obtained by LIPR-GE as the number of training data increases. Note that the values of  $b_1=b_2=0$ (in Eq. \eqref{eq:interface problem}) are used for all the test examples except Example \ref{example2}, where the non-zero values are mentioned. All code and data accompanying this manuscript are publicly available at \url{https://github.com/bzlu-Group/ConvergencePINNInterface}.

\subsection{Settings}
\noindent\textbf{Network architecture.}
The feed-forward tanh-neural networks of depth 5 and width 200 are employed for all experiments.

\noindent\textbf{Training data.} The training points are randomly drawn from the corresponding domains. Specially, taking $m_r=10,30,50,100,300,\cdots,10000$, we randomly sample the training data points $\{\mathbf{x}_r^i\}_{i=1}^{m_r}$ inside the domain $\Omega$ and then divided them into two parts, i.e., $\{\mathbf{x}_{r_1}^i\}_{i=1}^{m_{r_1}}$ and $\{\mathbf{x}_{r_2}^i\}_{i=1}^{m_{r_2}}$, according to the interface. In addition, taking $m_b=m_\Gamma=10\lfloor m_r^{1/2}\rfloor$, we randomly sample the training data points $\{\mathbf{x}_b^i\}_{i=1}^{m_b}$ and $\{\mathbf{x}_\Gamma^i\}_{i=1}^{m_\Gamma}$ on the boundary and interface, respectively, see Fig.~\ref{fig: train test dat point} (left) for an illustration. Note that this strategy satisfies the conditions stated in Theorem \ref{thm:main-elliptic}, i.e., $m_{r_2} = \mathcal{O}(m_{r_1})$, $m_{\Gamma} = \mathcal{O}(m_{r_1}^{\frac{d-1}{d}})$, $m_{b} = \mathcal{O}(m_{r_1}^{\frac{d-1}{d}})$.

\noindent\textbf{Gradient enhancement.}
The boundary and interface are parameterized by $\vartheta$, i.e., $\mathbf{x}=(x_1(\vartheta),x_2(\vartheta))$, and the gradients of the functions defined on the boundary or interface are derived by the related parameterized functions.

\noindent\textbf{Optimization.} We train the networks for 10,000 stochastic gradient descent steps by minimizing the loss using the Adam optimizer \cite{Diederik2015Adam}. The initial learning rate is $1\times10^{-3}$, halved every 1000 iterations; and full-batch training is employed.

\noindent\textbf{Regularization.} For the Lipschitz regularized terms in loss function (\ref{def: GE LIP PINN empirical loss}), we use the maximum of the sup norm of the derivative over the set of training data points to estimate the Lipschitz constant, more precisely,
\begin{equation*}
\begin{aligned}
&\big[\mathcal{L}_1[u_1]\big]^2_{\Omega_1}=\max_{1\leq j\leq m_{r_1}}\norm{\nabla \mathcal{L}_1[u_1](\mathbf{x}_{r_1}^j)}_\infty^2,
\
\big[\mathcal{L}_2[u_2]\big]^2_{\Omega_2}=\max_{1\leq j\leq m_{r_2}}\norm{\nabla \mathcal{L}_2[u_2](\mathbf{x}_{r_2}^j)}_\infty^2,
\\
&\big[\mathcal{B}[u_2]\big]_{\partial \Omega}^2=\max_{1\leq j\leq m_b}\norm{\nabla \mathcal{B}[u_2](\mathbf{x}_b^j)}_\infty^2,\
\big[\mathcal{I}_D[u_1,u_2]\big]_{\Gamma}^2=\max_{1\leq j\leq m_\Gamma}\norm{\nabla \mathcal{I}_D[u_1,u_2](\mathbf{x}_\Gamma^j)}_\infty^2,
\\
&\big[\mathcal{I}_N[u_1,u_2]\big]_{\Gamma}^2=\max_{1\leq j\leq m_\Gamma}\norm{\nabla \mathcal{I}_N[u_1,u_2](\mathbf{x}_\Gamma^j)}_\infty^2.
\end{aligned}
\end{equation*}
The weights in loss function \eqref{def: GE LIP PINN empirical loss} are set as $\bm{\lambda} = (\lambda_{r_1}, \lambda_{r_2}, \lambda_{b}, \lambda_{\Gamma_D}, \lambda_{\Gamma_N})=\bm{1}$, $\lambda_{r_1}^R= \lambda_{r_2}^R=\frac{1}{m_r}$, and $\lambda_{\Gamma_D}^R=\lambda_{\Gamma_N}^R=\frac{1}{m_\Gamma \sqrt{m_r}}$, $ \lambda_{b}^R=\frac{1}{m_b \sqrt{m_r}}$, which satisfy the conditions stated in Theorem~\ref{thm:main-elliptic}.

\begin{figure}[htbp] 
	\centering
	\includegraphics[width=1\textwidth]{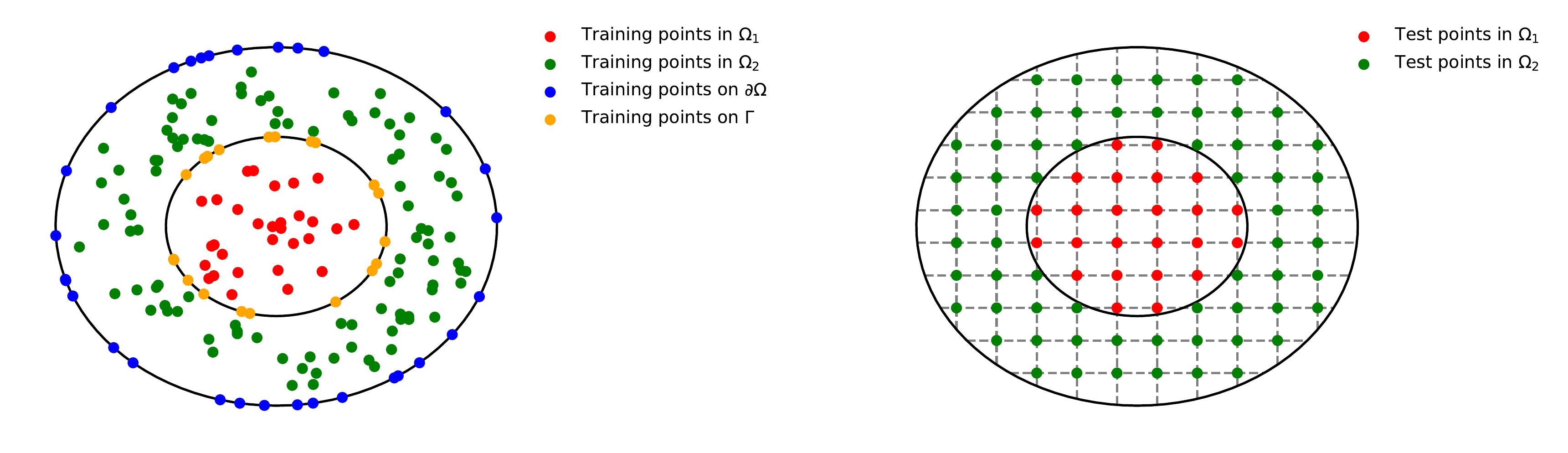}
	\caption{An illustration of the train and test data points. Left: These dots represent the training data points, which are randomly sampled in related regions. Right: An illustration of the equidistant test data points in the computational domain.}
	\label{fig: train test dat point}
\end{figure}

\noindent\textbf{Test.} After training, the $L^2$ error between the reference solution $u^*$ and the obtained neural network solution $\hat{u}$ is measured as
\begin{equation*}
\varepsilon_{L^2}=\norm{u-\hat{u}}_{L^2}:=\sqrt{\frac{1}{N}\sum_{i=1}^N \left|u^*(\mathbf{x}_i)-\hat{u}(\mathbf{x}_i)\right|^2},
\end{equation*}
where $N$ denotes the total number of test points in the computational domain, see Fig.~\ref{fig: train test dat point} (right) for an illustration. The $H^2$ error between $u^*$ and $\hat{u}$ is measured as
\begin{equation*}
\varepsilon_{H^2}=\left\{\sum_{|\bm{\tau}|\leq 2}\norm{D^{\bm{\tau}}(u^*-\hat{u})}^2_{L^2}\right\}^{\frac{1}{2}}.
\end{equation*}

\subsection{An elliptic interface problem with constant coefficients}\label{example1}
In this case, we consider Eq. \eqref{eq:interface problem} with a circle interface, which is given as $(x_1(\vartheta),x_2(\vartheta))=(1+\cos(\vartheta),1+\sin(\vartheta))$, where $\vartheta\in [0,2\pi]$. The computational domain in this problem is a closed disk with a radius of two and centered at $(1,1)$. The discontinuous coefficient $a$ is given as
\begin{equation*}
a(x_1,x_2)=\left\{
\begin{aligned}
&1,  \  &\text{in} \ \Omega_1, \\
&2,  \  &\text{in} \ \Omega_2. \\
\end{aligned}
\right.
\end{equation*}
The exact solution to this equation is given by
\begin{equation*}
u^*(x_1,x_2)=\left\{
\begin{aligned}
&2\tanh(x_1+x_2), \ &\text{in} \ \Omega_1, \\
&\tanh(x_1+x_2), \ &\text{in} \ \Omega_2. \\
\end{aligned}
\right.
\end{equation*}
Note that this solution can be exactly represented by the neural network we employed in this case. The corresponding  source term is $f(x_1,x_2)=8\tanh(x_1+x_2)-8\tanh(x_1+x_2)^3$ and the boundary and jump conditions can be found by using the exact solution.
\begin{figure}[htbp]
	\centering
	\includegraphics[width=1\textwidth]{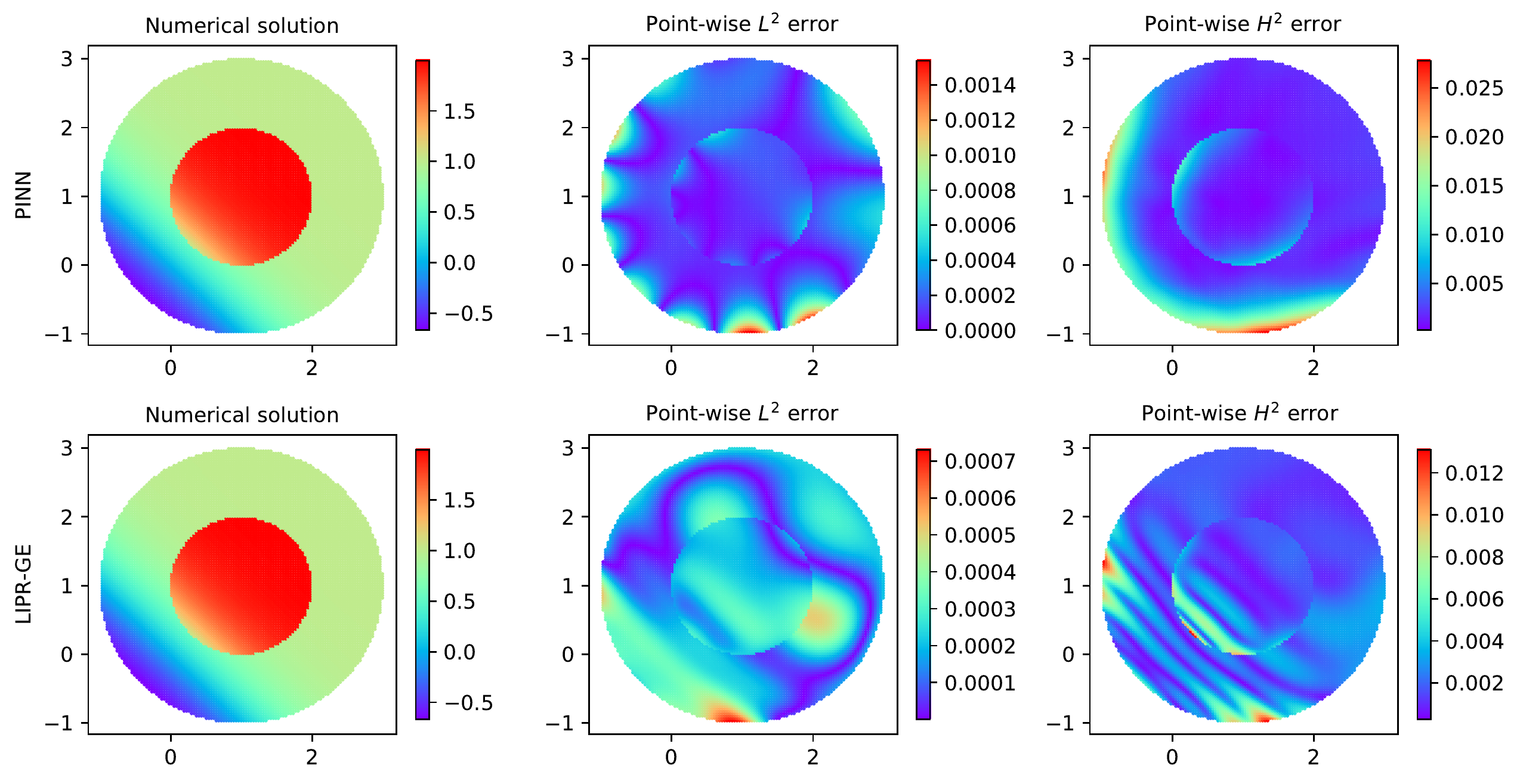}
	\caption{The numerical solution and point-wise errors for Example \ref{example1}. The first row gives the numerical results of PINN whereas the second row gives that of LIPR-GE. Here, the number of training data points $m_r=10000$.}
	\label{fig: point-wise error tanh}
\end{figure}
\begin{figure}[htbp]
	\centering
	\includegraphics[width=1\textwidth]{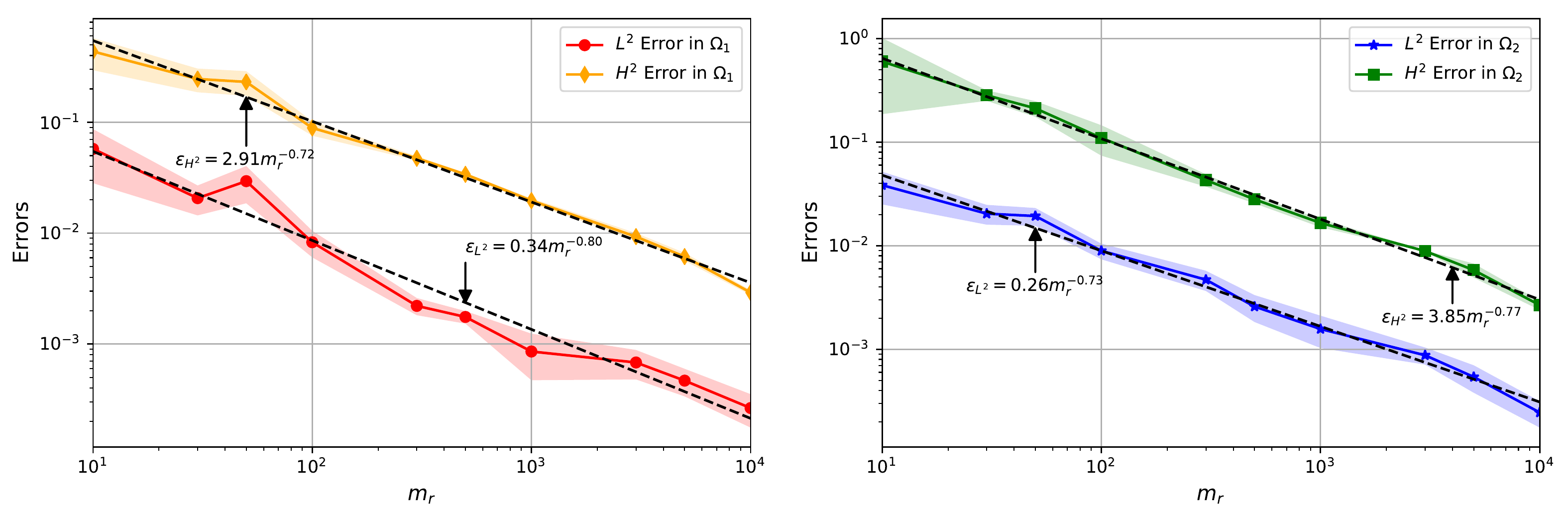}
	\caption{The $L^2$ and $H^2$ convergence of the errors of Example~\ref{example1} with respect to the number of training data points. The shaded regions are the one-standard-derivation from five runs with different training data and network initialization. Here, the number of test data points in $\Omega_1$ and $\Omega_2$ are $N_1=7833$ and $N_2=23584$, respectively. }
	\label{fig: error convergence example 1}
\end{figure}

In Fig. \ref{fig: point-wise error tanh}, we compare the numerical solution obtained by LIPR-GE with that obtained by PINN. It can be seen from this figure that both two numerical solutions have small $L^2$ and $H^2$ errors, while the result of LIPR-GE performs better than that of PINN. Numerical results indicate that PINN with gradient enhancement is acceptable in practice as it does not affect the performance of vanilla PINN.

In Fig. \ref{fig: error convergence example 1}, we show the $L^2$ and the $H^2$ errors of the results obtained by LIPR-GE with respect to the number of interior points $m_r$. Note that the number of points on the boundary and on the interface increases as $m_r$ increases. To show the convergence trend, we construct a univariate linear regression, i.e., $\log_{10}\varepsilon=\alpha\log_{10}m_r+\beta$, for the logarithm of the numerical solution error $\log_{10}\varepsilon$ (i.e., $\log_{10}\varepsilon_{L^2}$ or $\log_{10}\varepsilon_{H^2}$) versus $\log_{10}m_r$, and estimate the parameters $\alpha$ and $\beta$ using the linear least square algorithm. The dash lines here are the results of the regression. As expected by Theorem~\ref{thm:main-elliptic}, the $L^2$ and $H^2$ errors decrease as $m_r$ increases, implying the $L^2$- and $H^2$-convergence.

\subsection{An elliptic interface problem with high contrast coefficients}\label{example3}
Next, we consider Eq. \eqref{eq:interface problem} in the case of a large contrast in discontinuous coefficient $a$. Here, the computational domain $\Omega$ is a disk with a radius of one, centered at the origin. The interface is defined as $(x_1,x_2)=(r_0\cos(\vartheta),r_0\sin(\vartheta))$, where $r_0=0.5$. This  exact solution \cite{wang2020mesh} (in the polar coordinate) of this example is expressed as
\begin{equation*}
  u^*(r,\theta) = \left
\{\begin{aligned}
&\frac{r^3}{1000}, \ &r< r_0,\\
&r^3-\frac{999}{1000}r^3_0,\ &r\geq r_0,
\end{aligned}\right.
\end{equation*}
where $r=\sqrt{x_1^2+x_2^2}$ and the discontinuous coefficient is stated as
\begin{equation*}
a(x_1,x_2) = \left
\{\begin{aligned}
&1000,\ &&r< r_0,\\
&1,\ && r\geq r_0.
\end{aligned}\right.
\end{equation*}
Source terms, boundary and interface jump conditions are calculated from the above exact solution.

\begin{figure}[htbp]
	\centering
	\includegraphics[width=1\textwidth]{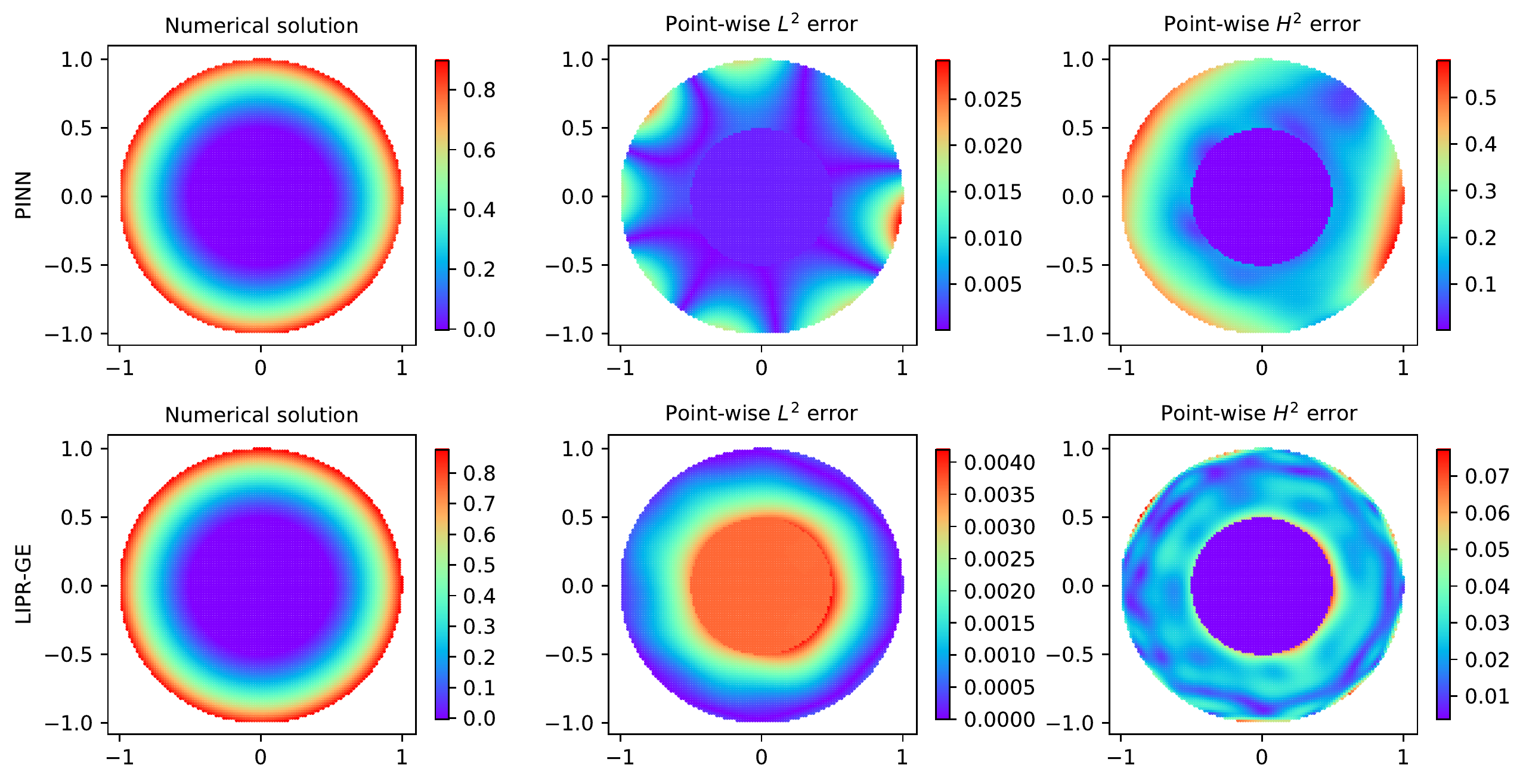}
	\caption{The numerical solution and point-wise errors for Example \ref{example3}. The first row gives the numerical results of PINN whereas the second row gives that of LIPR-GE. Here, the number of training data points $m_r=10000$.}
	\label{fig: compare high contrast}
\end{figure}
\begin{figure}[htbp]
	\centering
	\includegraphics[width=1\textwidth]{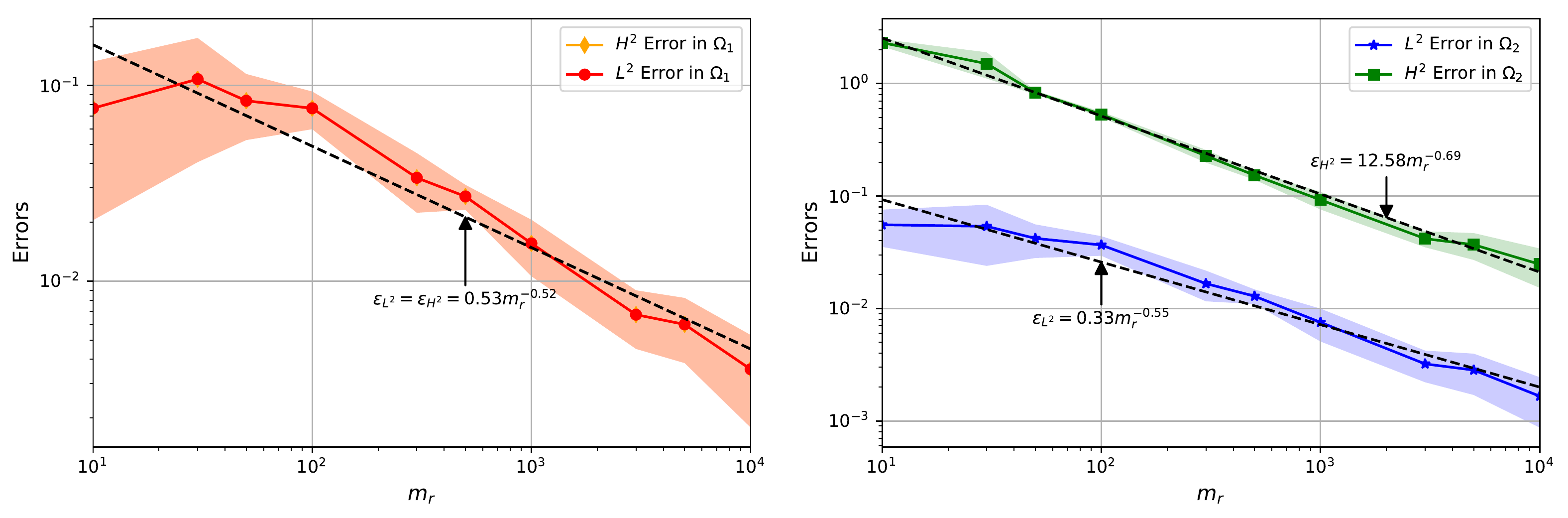}
	\caption{The $L^2$ and $H^2$ convergence of the errors of Example~\ref{example3} with respect to the number of training data points. The shaded regions are the one-standard-derivation from five runs with different training data and network initialization. Here, the number of test data points in $\Omega_1$ and $\Omega_2$ are $N_1=7829$ and $N_2=23588$, respectively.}
	\label{fig: high contrast}
\end{figure}

We first investigate the effect of gradient-enhanced strategies on the interfaces and depict the numerical results obtained by LIPR-GE and those obtained by PINN in Fig. \ref{fig: compare high contrast}. It can be observed that the auxiliary loss terms do not affect performance but also significantly reduce the absolute point-wise error at the interface and boundary. In addition, we continue testing the convergence. In Fig. \ref{fig: high contrast}, we show the $L^2$ and $H^2$ errors obtained by LIPR-GE with respect to the number of training data. We see that the rate of convergence is at least $\mathcal{O}(m_r^{-0.50})$. The results in this figure clearly demonstrate the convergence trend of $L^2$-error and $H^2$-error.

\subsection{An elliptic interface problem with variable coefficients}\label{example4}
In this case, we consider Eq. \eqref{eq:interface problem} with variable coefficients. Here, the computational domain is a closed disk placed at the origin with a radius of two, and the interface is circular with a radius of one and centered at the origin. The interface points can be obtained via $(x_1,x_2) = (\cos(\vartheta),\sin(\vartheta))$, where $\vartheta \in [0,2\pi)$. The coefficient $a$ is defined to be
\begin{equation*}
a(x_1,x_2)=\left\{
\begin{aligned}
&\cos(x_1+x_2)+2, \ &\text{in} \ \Omega_1, \\
&\sin(x_1+x_2)+2, \ &\text{in}  \ \Omega_2. \\
\end{aligned}
\right.
\end{equation*}
The exact solution to this problem is given by \cite{hou2005numerical}
\begin{equation*}
u^*(x_1,x_2)=\left\{
\begin{aligned}
&\sin(x_1+x_2), \ &\text{in} \ \Omega_1, \\
&\ln(x_1^2+x_2^2), \ &\text{in} \ \Omega_2, \\
\end{aligned}
\right.
\end{equation*}
and the corresponding source term is
\begin{equation*}
f(x_1,x_2)=\left\{
\begin{aligned}
&4\left(\cos(x_1+x_2)+1\right)\sin(x_1+x_2), \ &\text{in} \ \Omega_1, \\
&-2\cos(x_1+x_2)\frac{x_1+x_2}{x_1^2+x_2^2}, \ &\text{in} \ \Omega_2. \\
\end{aligned}
\right.
\end{equation*}

\begin{figure}[htbp] 
	\centering
	\includegraphics[width=1\textwidth]{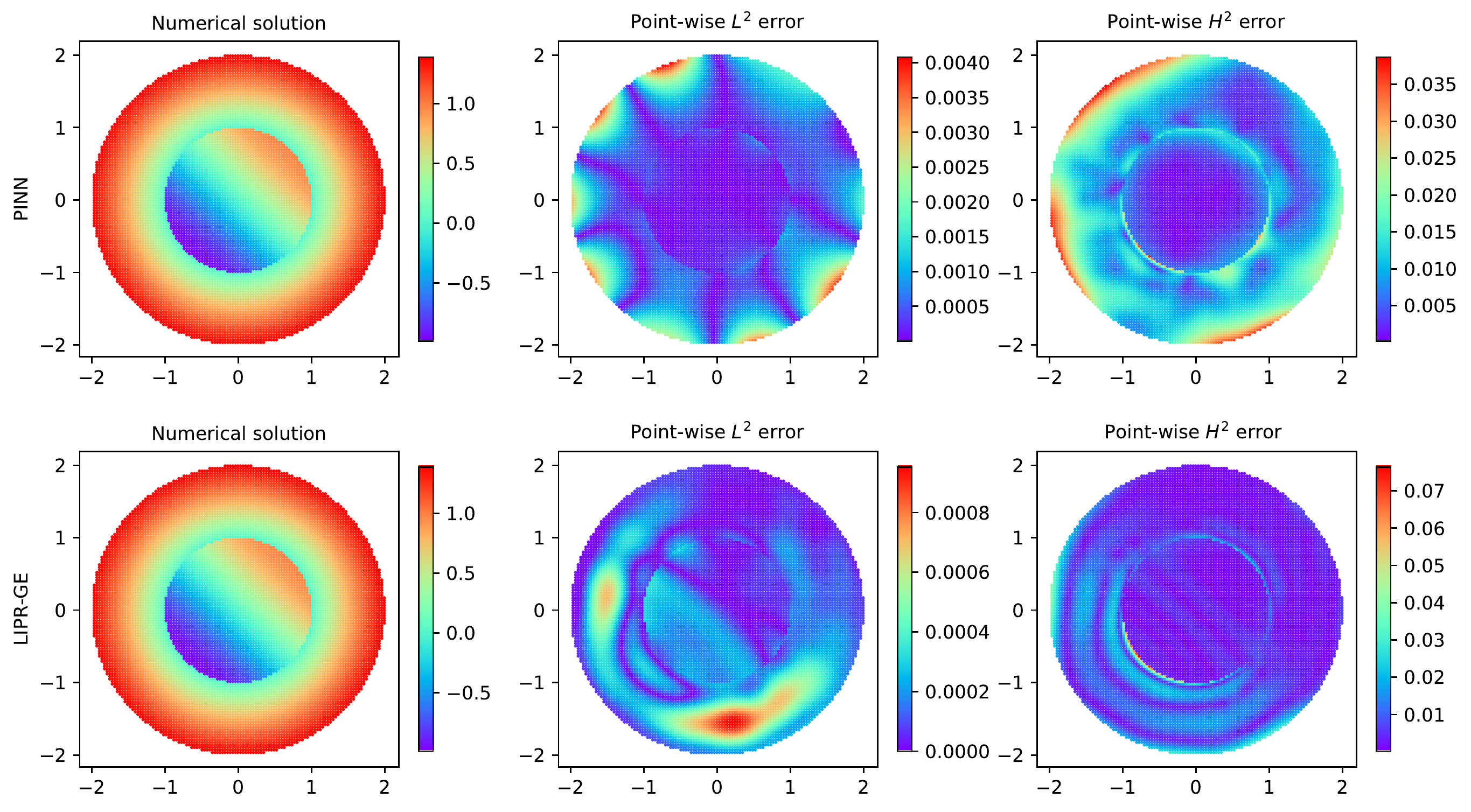}
	\caption{The numerical solution and point-wise errors for Example \ref{example4}. The first row gives the numerical results of PINN whereas the second row gives that of LIPR-GE. Here, the number of training data points $m_r=10000$.}
	\label{fig: point-wise error vc}
\end{figure}
\begin{figure}[htbp] 
	\centering
	\includegraphics[width=1\textwidth]{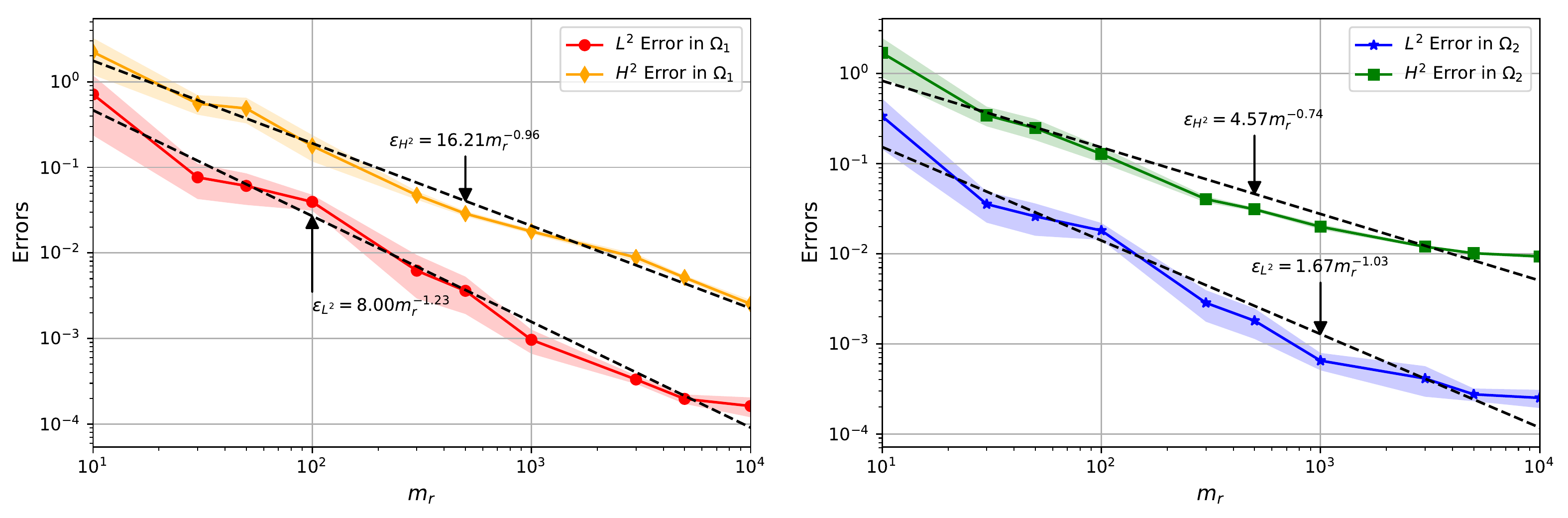}
	\caption{The $L^2$ and $H^2$ convergence of the errors of Example~\ref{example4} with respect to the number of training data points. The shaded regions are the one-standard-derivation from five runs with different training data and network initialization. Here, the number of test data points in $\Omega_1$ and $\Omega_2$ are $N_1=7829$ and $N_2=23556$, respectively.}
	\label{fig: error convergence example 4}
\end{figure}

Numerical results for Example \ref{example4} are displayed in Fig. \ref{fig: point-wise error vc} and Fig. \ref{fig: error convergence example 4}. In Fig. \ref{fig: point-wise error vc},  we first present a comparison between the exact and the numerical solution obtained using PINN or LIPR-GE. It can be observed that the solution obtained by LIPR-GE is in good agreement with that of PINN. As expected, the accuracy of LIPR-GE is not affected by the gradient enhancement and Lipschitz regularization. Furthermore, Fig. \ref{fig: error convergence example 4} shows the $L^2$- and $H^2$- convergence for neural network solutions in subdomains $\Omega_1$ (left) and $\Omega_2$ (right).
Again, we observe that the errors in both subdomains decreases rapidly as the number of training data points $m_r$ increases.
The theoretical results in Theorem \ref{thm:main-elliptic} are still valid in solving elliptic interface problems with variable coefficients.

\subsection{An elliptic interface problem with irregular geometry}\label{example2}
In this case, we consider Eq. \eqref{eq:interface problem} with a complicated interface $\Gamma$ (see Fig.~\ref{fig: example2 irregular domain} left), which consists of both convex and concave curves and is expressed with the following parametric equations
\begin{equation*}
\begin{aligned}
    x_1(\vartheta)=r\cos(\vartheta),\
    x_2(\vartheta)=r\sin(\vartheta),
\end{aligned}
\end{equation*}
where $r=1+0.36\sin(3\vartheta)+0.16\cos(2\vartheta)+0.4\cos(5\vartheta)$, $\vartheta\in[0,2\pi]$. Computational domain is shown in Fig.~\ref{fig: example2 irregular domain}. The boundary points (in polar coordinates) are obtained as $x_1=1.5r\cos(\vartheta)$ and $x_2=1.5r\sin(\vartheta)$, where
$r=1.5+0.14\sin(4\vartheta)+0.12\cos(6\vartheta)+0.09\cos(5\vartheta)$, $\vartheta \in [0,2\pi)$. The coefficient $b_1=b_2=-1$, and the discontinuous coefficient $a$ is defined to be
\begin{equation*}
a(x,y)=\left\{
\begin{aligned}
&x_1x_2, \ &\text{in} \ \Omega_1, \\
&x_1^2+x_2^2, \ &\text{in}  \ \Omega_2. \\
\end{aligned}
\right.
\end{equation*}
The exact solution is set to be
\begin{equation*}
u^*(x_1,x_2)=\left\{
\begin{aligned}
&\sin(x_1+x_2), \ &\text{in} \ \Omega_1, \\
&\cos(x_1+x_2), \ &\text{in} \ \Omega_2. \\
\end{aligned}
\right.
\end{equation*}
The necessary source terms, boundary and interface jump conditions can be derived from this exact solution.

\begin{figure}[htbp] 
	\centering
	\includegraphics[width=1\textwidth]{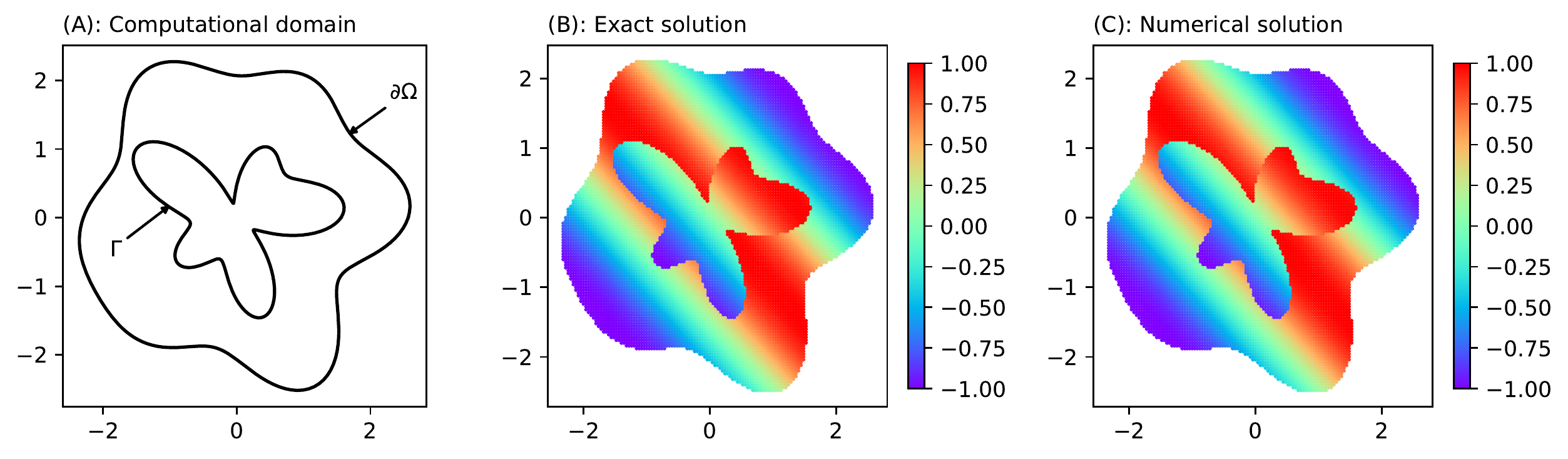}
	\caption{The computational domain, exact and numerical solutions for Example~\ref{example2}. Here, the number of training data points $m_r=10000$.}
	\label{fig: example2 irregular domain}
\end{figure}
\begin{figure}[htbp] 
	\centering
	\includegraphics[width=1\textwidth]{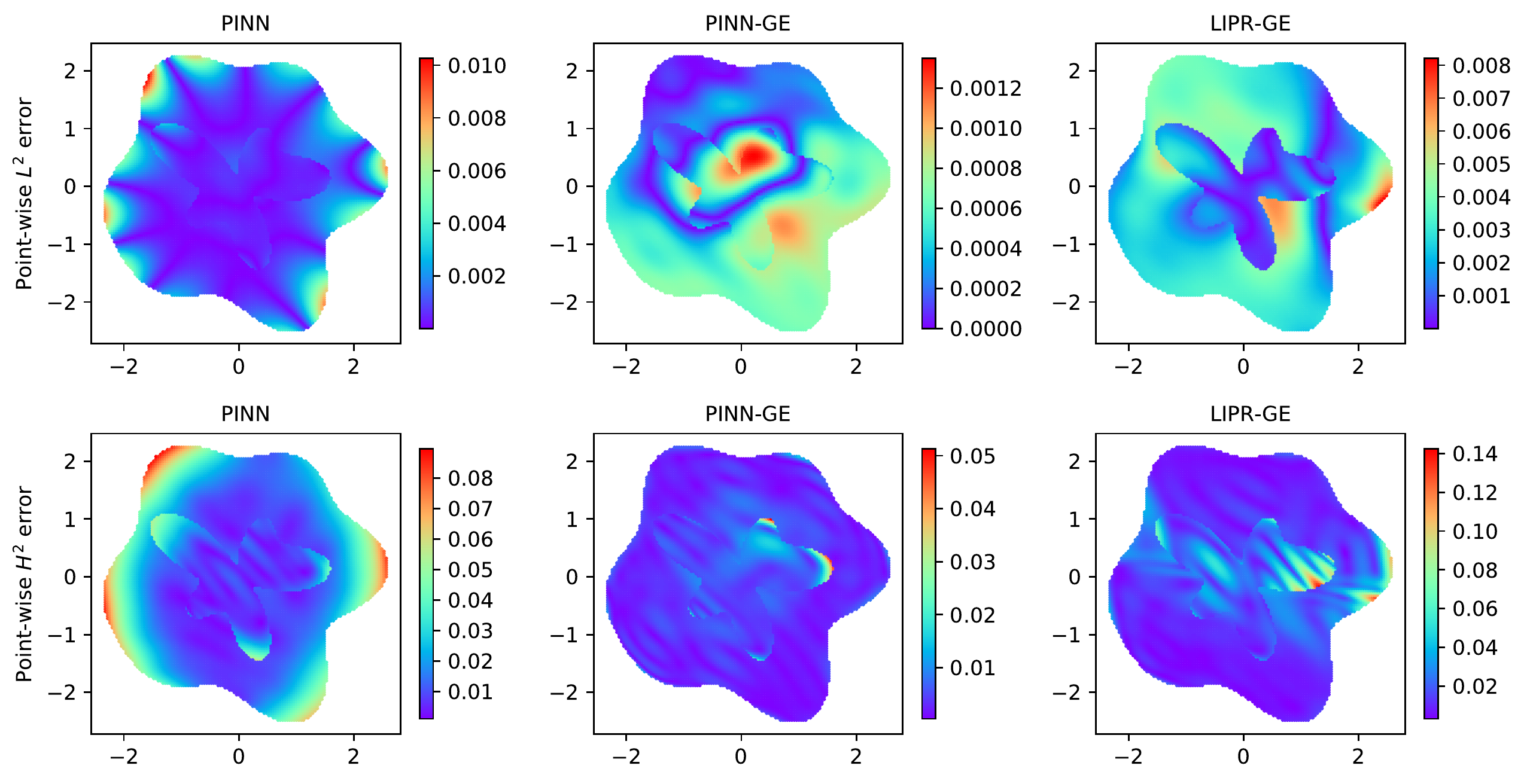}
	\caption{Point-wise errors for Example \ref{example2}. The errors of PINN, PINN-GE, and LIPR-GE are placed in the first, second, and third columns, respectively. Here, the number of training data points $m_r=10000$.}
	\label{fig: point-wise error cd}
\end{figure}
\begin{figure}[htbp] 
	\centering
	\includegraphics[width=1\textwidth]{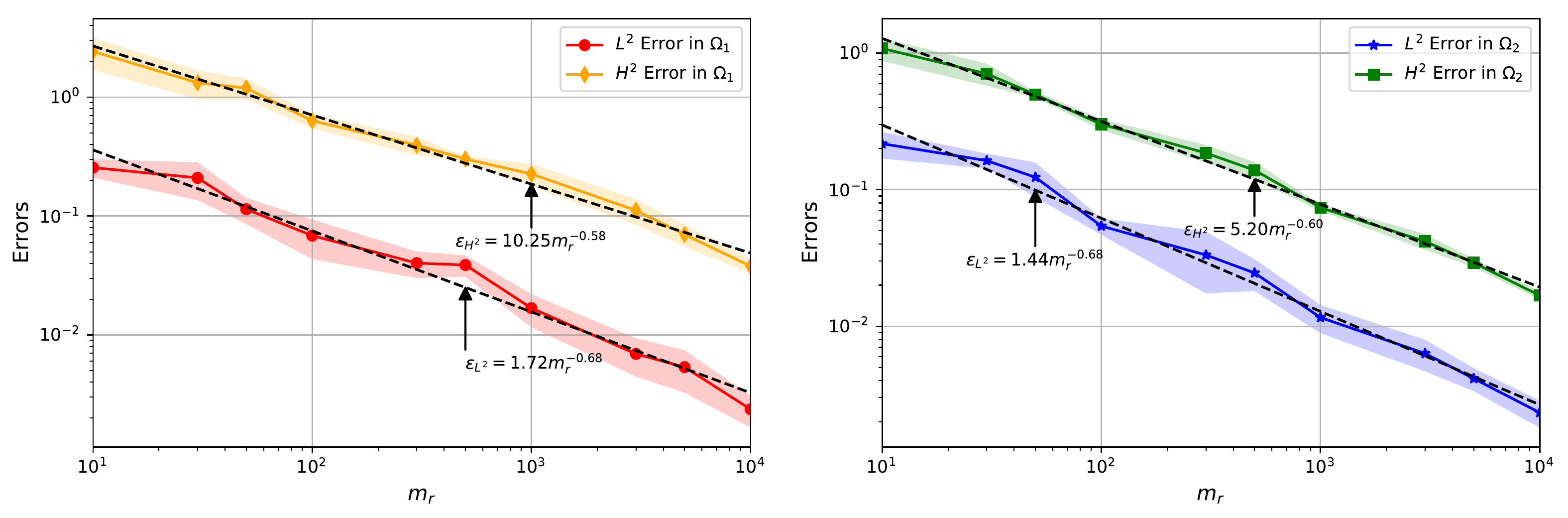}
	\caption{The $L^2$ and $H^2$ convergence of the errors of Example~\ref{example2} with respect to the number of training data points. The shaded regions are the one-standard-derivation from five runs with different training data and network initialization. Here, the number of test data points in $\Omega_1$ and $\Omega_2$ are $N_1=4044$ and $N_2=13787$, respectively.}
	\label{fig: error convergence example2}
\end{figure}

The computational domain, the exact solution of Example \eqref{example2}, and the numerical solution obtained by LIPR-GE are shown in Fig. \ref{fig: example2 irregular domain}. And the point-wise $L^2$ and $H^2$ errors in the whole domain for PINN, PINN-GE, and LIPR-GE are presented in Fig. \ref{fig: point-wise error cd}. It is observed that LIPR-GE is less accurate than  Vanilla PINN and PINN-GE. One explanation is that the additional auxiliary loss terms, especially the Lipschitz regularization, make it difficult for the optimization to find the minimizer. However, our following results clearly demonstrate that LIPR-GE can still recover the exact solution up to $O(10^{-3})$ accuracy in $L^2$ and $O(10^{-2})$ accuracy in $H^2$. Fig. \ref{fig: error convergence example2} summarizes the convergent evolution of the $L^2$ and $H^2$ errors obtained by LIPR-GE with respect to the number of training data points. Clearly, the numerical results demonstrate both the $L^2$ and $H^2$-convergence of the errors, which are consistent to the theoretical analysis of this paper.

\section{Proofs\label{proofs}}
We present the proof of Theorem \ref{thm:main-elliptic} in this section. The technique used in the following proof is similar to that used in the proof of Theorem 3 of \cite{shin2020on}. However, our Theorem \ref{thm:main-elliptic} applies to elliptic interface problems without a zero-loss assumption of interface and boundary conditions. This prevents direct use of the result from \cite{shin2020on}, which applies only to elliptic PDEs with a network solution obeying the boundary conditions exactly.
Throughout this section, we assume that Assumptions \ref{assumption:data-dist} and  \ref{assumption:convergence} hold. Among the crucial technical tools used here are some Sobolev inequality and probability space filling arguments \cite{calder2019consistency}. We start with the following auxiliary lemma:

\begin{lemma} \label{app:lemma-lip}
Suppose Assumption \ref{assumption:data-dist} holds. For training data $\mathcal{T}_{r_1}^{m_{r_1}} = \{\mathbf{x}_{r_1}^i\}_{i=1}^{m_{r_1}}$, $\mathcal{T}_{r_2}^{m_{r_2}} = \{\mathbf{x}_{r_2}^i\}_{i=1}^{m_{r_2}}$, $\mathcal{T}_{b}^{m_{b}} = \{\mathbf{x}_{b}^i\}_{i=1}^{m_{b}}$
and $\mathcal{T}_{\Gamma}^{m_{\Gamma}} = \{\mathbf{x}_{\Gamma}^i\}_{i=1}^{m_{\Gamma}}$, if $m_{r_1}$, $m_{r_2}$, $m_b$ and $m_{\Gamma}$ are large enough to satisfy that there exists $\mathbf{x}_{r_1}' \in \mathcal{T}_{r_1}$, $\mathbf{x}_{r_2}' \in \mathcal{T}_{r_2}$, $\mathbf{x}_{b}' \in \mathcal{T}_{b}$ and $\mathbf{x}_{\Gamma}' \in \mathcal{T}_{\Gamma}$ such that
$\norm{\mathbf{x}_{r_1}- \mathbf{x}_{r_1}'}_2 \le \epsilon_{r_1}$, $\norm{\mathbf{x}_{r_2}- \mathbf{x}_{r_2}'}_2 \le \epsilon_{r_2}$, $\norm{\mathbf{x}_b - \mathbf{x}_b' }_2 \le \epsilon_b$ and
$\norm{\mathbf{x}_{\Gamma} - \mathbf{x}_{\Gamma}'}_2 \le \epsilon_{\Gamma}$ for any $\mathbf{x}_{r_1} \in \Omega_1$, $\mathbf{x}_{r_2} \in \Omega_2$, $\mathbf{x}_{b} \in \partial\Omega$ and $\mathbf{x}_{\Gamma} \in \Gamma$,
then, we have
\begin{equation*}
\begin{aligned}
\text{Loss}^{\text{PINN}}(u_1,u_2;\bm{\lambda})
\le &
C_{\bm{m}} \cdot \text{Loss}_{\bm{m}}^{\text{PINN}}(u_1,u_2;\bm{\lambda})+3\lambda_{r_1}\epsilon_{r_1}^{2}\left(\big[\mathcal{L}_1[u_1]\big]_{\Omega_1}^2+\big[f_1\big]_{\Omega_1}^2\right)\\
&+3\lambda_{r_2}\epsilon_{r_2}^{2}\left(\big[\mathcal{L}_2[u_2]\big]_{\Omega_2}^2+\big[f_2\big]_{\Omega_2}^2\right)
+3\lambda_{b}\epsilon_{b}^{2}\left(\big[\mathcal{B}[u_2]\big]_{\partial \Omega}^2+\big[\bm{g}\big]_{\partial \Omega}^2\right)\\ &+3\lambda_{\Gamma_D}\epsilon_{\Gamma}^{2}\left(\big[\mathcal{I}_D[u_1,u_2]\big]_{\Gamma}^2
+\big[\bm{\varphi}\big]_{\Gamma}^2\right)
+3\lambda_{\Gamma_N}\epsilon_{\Gamma}^{2}\left(\big[\mathcal{I}_N[u_1,u_2]\big]_{\Gamma}^2
+\big[\bm{\psi}\big]_{\Gamma}^2\right),
\end{aligned}
\end{equation*}
where $C_{r_1}$, $C_{r_2}$, $C_{b}$, $C_{\Gamma}$ are those defined in Assumption \ref{assumption:data-dist}
and
$$
C_{\bm{m}} = 3\max\{C_{r_1}m_{r_1}\epsilon_{r_1}^{d}, C_{r_2}m_{r_2}\epsilon_{r_2}^{d},
C_{b}m_{b}\epsilon_{b}^{d-1}, C_{\Gamma}m_{\Gamma}\epsilon_{\Gamma}^{d-1} \}.
$$
\end{lemma}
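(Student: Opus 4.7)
The plan is to bound each of the five continuous expected loss terms in $\text{Loss}^{\text{PINN}}(u_1,u_2;\bm{\lambda})$ by its empirical counterpart plus an $\epsilon^2$-order Lipschitz residual, and then absorb all five prefactors into a single constant $C_{\bm{m}}$ by taking a maximum. The only tools needed are the elementary inequality $(a+b+c)^2 \le 3(a^2+b^2+c^2)$, the Lipschitz regularity hypotheses inherited from Assumption~\ref{assumption:convergence}, and the volume upper bound $\mu_{r_1}(B_\epsilon(\x) \cap \Omega_1) \le C_{r_1}\epsilon^d$ together with its siblings $C_{r_2}, C_b, C_\Gamma$ from part (3) of Assumption~\ref{assumption:data-dist}.

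First I would treat the interior term $\lambda_{r_1}\norm{\mathcal{L}_1[u_1]-f_1}^2_{L^2(\Omega_1;\mu_{r_1})}$. For each $\x \in \Omega_1$, the $\epsilon_{r_1}$-covering assumption yields a training point $\x' = \mathbf{x}_{r_1}^{i(\x)} \in \mathcal{T}_{r_1}^{m_{r_1}}$ with $\norm{\x-\x'}_2 \le \epsilon_{r_1}$. Writing $\mathcal{L}_1[u_1](\x)-f_1(\x)$ as a three-term sum consisting of the empirical residual at $\x'$ plus the increments $\mathcal{L}_1[u_1](\x)-\mathcal{L}_1[u_1](\x')$ and $f_1(\x')-f_1(\x)$, applying $(a+b+c)^2 \le 3(a^2+b^2+c^2)$, and invoking the Lipschitz seminorm gives
\begin{equation*}
|\mathcal{L}_1[u_1](\x)-f_1(\x)|^2 \;\le\; 3|\mathcal{L}_1[u_1](\x')-f_1(\x')|^2 + 3\epsilon_{r_1}^2\bigl(\big[\mathcal{L}_1[u_1]\big]_{\Omega_1}^2 + \big[f_1\big]_{\Omega_1}^2\bigr).
\end{equation*}
I would then partition $\Omega_1 = \bigsqcup_{i} V_i$ in a Voronoi-type fashion with $V_i \subset B_{\epsilon_{r_1}}(\mathbf{x}_{r_1}^i) \cap \Omega_1$, so that $\mu_{r_1}(V_i) \le C_{r_1}\epsilon_{r_1}^d$, and integrate the above pointwise inequality against $\mu_{r_1}$. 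The empirical piece collapses to
\begin{equation*}
\int_{\Omega_1}|\mathcal{L}_1[u_1](\x')-f_1(\x')|^2 \, d\mu_{r_1}(\x) \;\le\; C_{r_1}\epsilon_{r_1}^d \sum_{i=1}^{m_{r_1}}|\mathcal{L}_1[u_1](\mathbf{x}_{r_1}^i)-f_1(\mathbf{x}_{r_1}^i)|^2 \;=\; C_{r_1}m_{r_1}\epsilon_{r_1}^d\,\mathcal{M}_{\Omega_1},
\end{equation*}
producing exactly the required $3C_{r_1}m_{r_1}\epsilon_{r_1}^d$ factor on $\lambda_{r_1}\mathcal{M}_{\Omega_1}$ and the $3\lambda_{r_1}\epsilon_{r_1}^2\bigl([\mathcal{L}_1[u_1]]_{\Omega_1}^2+[f_1]_{\Omega_1}^2\bigr)$ residual.

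The $\Omega_2$ term is handled by an identical argument with the substitution $(\mathcal{L}_2,f_2,\epsilon_{r_2},C_{r_2},m_{r_2})$. The three surface terms follow the same template, the only change being that the Voronoi cells live on a $(d-1)$-dimensional manifold and the upper bound in Assumption~\ref{assumption:data-dist}(3) reads $\mu_b(V_i^b) \le C_b\epsilon_b^{d-1}$ and $\mu_\Gamma(V_i^\Gamma) \le C_\Gamma\epsilon_\Gamma^{d-1}$; this is precisely why the exponent drops to $d-1$ in the $\partial\Omega$, $\Gamma_D$ and $\Gamma_N$ contributions. Summing the five bounds and taking the common prefactor on the empirical losses to be the maximum
\begin{equation*}
C_{\bm{m}} \;=\; 3\max\{C_{r_1}m_{r_1}\epsilon_{r_1}^d,\; C_{r_2}m_{r_2}\epsilon_{r_2}^d,\; C_b m_b \epsilon_b^{d-1},\; C_\Gamma m_\Gamma \epsilon_\Gamma^{d-1}\}
\end{equation*}
absorbs both the $\Gamma_D$ and $\Gamma_N$ prefactors into the single $\Gamma$-slot and reproduces the statement verbatim.

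I do not expect a substantive obstacle. The only subtleties worth watching are (i) that $[\,\cdot\,]_{\mathcal U}$ is defined in this paper via the $\ell^\infty$ metric while the covering radius is stated in $\ell^2$, but $\norm{\cdot}_\infty \le \norm{\cdot}_2$ makes the Lipschitz control only stronger; (ii) that on the surfaces $\partial\Omega$ and $\Gamma$ the relevant Lipschitz seminorms are the intrinsic ones, which are consistent with the $C^{0,L}$ hypotheses in Assumption~\ref{assumption:convergence}(2); and (iii) that the Voronoi-type partitions need only satisfy $V_i \subset B_{\epsilon_{\cdot}}(\mathbf{x}^i)$, not any regularity, so their existence is immediate. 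The rest is bookkeeping.
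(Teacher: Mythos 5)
Your proposal is correct and follows essentially the same route as the paper's proof: the three-term Cauchy--Schwarz split, Lipschitz control of the two increment terms, a Voronoi partition subordinate to the covering radius, and the measure upper bounds from Assumption~\ref{assumption:data-dist}(3) to convert the integral of the empirical residual into $C\,m\,\epsilon^{d}$ (resp.\ $C\,m\,\epsilon^{d-1}$) times $\mathcal{M}$. The only cosmetic difference is that the paper bundles all five residuals into one function and takes a single expectation over the product measure, while you treat each term separately, which is equivalent since each term depends on only one of the four variables.
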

\begin{proof}
As a consequence of Cauchy's inequality, i.e.,
$
\norm{\mathbf{x} + \mathbf{y} + \mathbf{z}}_2^2 \le 3(\norm{\mathbf{x}}_2^2 + \norm{\mathbf{y}}_2^2 + \norm{\mathbf{z}}_2^2)
$ for any three vectors $\mathbf{x}, \mathbf{y}, \mathbf{z}$,
we deduce that for $\mathbf{x}_{r_i}, \mathbf{x}_{r_i}' \in \Omega_i, i=1,2$,
\begin{equation*}
\begin{aligned}
\left|\mathcal{L}_i[u_i](\mathbf{x}_{r_i}) - f_i(\mathbf{x}_{r_i})\right|^2
\leq 3\left(\left|\mathcal{L}_i[u_i](\mathbf{x}_{r_i}) -\mathcal{L}_i[u_i](\mathbf{x}_{r_i}')\right|^2 + \left|\mathcal{L}_i[u_i](\mathbf{x}_{r_i}') -f_i(\mathbf{x}_{r_i}')\right|^2
+ \left|f_i(\mathbf{x}_{r_i}') - f_i(\mathbf{x}_{r_i})\right|^2 \right).
\end{aligned}
\end{equation*}
Similarly, for $\mathbf{x}_{b}, \mathbf{x}_{b}' \in \partial \Omega$, we have
\begin{equation*}
\norm{\mathcal{B}[u_2](\mathbf{x}_{b}) - \bm{g}(\mathbf{x}_{b})}_2^2
\leq 3\left(\norm{\mathcal{B}[u_2](\mathbf{x}_{b})- \mathcal{B}[u_2](\mathbf{x}_{b}')}_2^2
+\norm{\mathcal{B}[u_2](\mathbf{x}_{b}') - \bm{g}(\mathbf{x}_{b}')}_2^2
 +\norm{\bm{g}(\mathbf{x}_{b}')- \bm{g}(\mathbf{x}_{b})}_2^2\right),
\end{equation*}
and for $\mathbf{x}_{\Gamma}, \mathbf{x}_{\Gamma}' \in \Gamma$, we have
\begin{equation*}
\begin{aligned}
\norm{\mathcal{I}_D[u_1,u_2](\mathbf{x}_{\Gamma}) - \bm{\varphi}(\mathbf{x}_{\Gamma})}_2^2
\leq 3\big(&\norm{\mathcal{I}_D[u_1,u_2](\mathbf{x}_{\Gamma})- \mathcal{I}_D[u_1,u_2](\mathbf{x}_{\Gamma}')}_2^2+\norm{\mathcal{I}_D[u_1,u_2](\mathbf{x}_{\Gamma}') - \bm{\varphi}(\mathbf{x}_{\Gamma}')}_2^2\\ &+\norm{\bm{\varphi}(\mathbf{x}_{\Gamma})- \bm{\varphi}(\mathbf{x}_{\Gamma}')}_2^2\big),
\end{aligned}
\end{equation*}
\begin{equation*}
\begin{aligned}
\norm{\mathcal{I}_N[u_1,u_2](\mathbf{x}_{\Gamma}) - \bm{\varphi}(\mathbf{x}_{\Gamma})}_2^2
\leq 3\big(&\norm{\mathcal{I}_N[u_1,u_2](\mathbf{x}_{\Gamma})- \mathcal{I}_N[u_1,u_2](\mathbf{x}_{\Gamma}')}_2^2+\norm{\mathcal{I}_N[u_1,u_2](\mathbf{x}_{\Gamma}') - \bm{\psi}(\mathbf{x}_{\Gamma}')}_2^2\\
&+\norm{\bm{\psi}(\mathbf{x}_{\Gamma})- \bm{\psi}(\mathbf{x}_{\Gamma}')}_2^2\big).
\end{aligned}
\end{equation*}

In addition, by the conditions, for $\forall \mathbf{x}_{r_1} \in \Omega_1$, $\forall \mathbf{x}_{r_2} \in \Omega_2$, $\forall \mathbf{x}_{b} \in \partial\Omega$ and $\forall \mathbf{x}_{\Gamma} \in \Gamma$, there exist $\mathbf{x}_{r_1}' \in \mathcal{T}_{r_1}^{m_{r_1}}$, $\mathbf{x}_{r_2}' \in \mathcal{T}_{r_2}^{m_{r_2}}$, $\mathbf{x}_{b}' \in \mathcal{T}_{b}^{m_b}$ and $\mathbf{x}_{\Gamma}' \in \mathcal{T}_{\Gamma}^{m_\Gamma}$ such that $\norm{\mathbf{x}_{r_1}- \mathbf{x}_{r_1}'}_2 \le \epsilon_{r_1}$, $\norm{\mathbf{x}_{r_2}- \mathbf{x}_{r_2}'}_2 \le \epsilon_{r_2}$, $\norm{\mathbf{x}_b - \mathbf{x}_b'}_2 \le \epsilon_b$ and $\norm{\mathbf{x}_{\Gamma} - \mathbf{x}_{\Gamma}'}_2 \le \epsilon_{\Gamma}$.
Taking
\begin{equation*}
\begin{aligned}
\mathbf{L}(\mathbf{x}_{r_1}, \mathbf{x}_{r_2}, \mathbf{x}_b, \mathbf{x}_\Gamma;u_1,u_2,\bm{\lambda},\bm{0})
=&\lambda_{r_1}\left|\mathcal{L}_1[u_1](\mathbf{x}_{r_1}) - f_1(\mathbf{x}_{r_1})\right|^2
+\lambda_{r_2}\left|\mathcal{L}_2[u_2](\mathbf{x}_{r_2}) - f_2(\mathbf{x}_{r_2})\right|^2\\
&+\lambda_{b}\norm{\mathcal{B}[u_2](\mathbf{x}_{b}) - \bm{g}(\mathbf{x}_{b})}_2^2
+\lambda_{\Gamma_D}\norm{\mathcal{I}_D[u_1,u_2](\mathbf{x}_{\Gamma})-\bm{\varphi}(\mathbf{x}_{\Gamma})}_2^2\\
&+\lambda_{\Gamma_N}\norm{\mathcal{I}_N[u_1,u_2](\mathbf{x}_{\Gamma})-\bm{\psi}(\mathbf{x}_{\Gamma})}_2^2,
\end{aligned}
\end{equation*}
we have that
\begin{equation*}
\begin{aligned}
&\mathbf{L}(\mathbf{x}_{r_1}, \mathbf{x}_{r_2}, \mathbf{x}_b, \mathbf{x}_\Gamma;u_1,u_2,\bm{\lambda},\bm{0})\\
\leq& 3\mathbf{L}(\mathbf{x}_{r_1}',\mathbf{x}_{r_2}',\mathbf{x}_b',\mathbf{x}_\Gamma';u_1,u_2,\bm{\lambda},\bm{0})
+3\lambda_{r_1}\left(\left|\mathcal{L}_1[u_1](\mathbf{x}_{r_1}) -\mathcal{L}_1[u_1](\mathbf{x}_{r_1}')\right|^2+\left|f_1(\mathbf{x}_{r_1}) - f_1(\mathbf{x}_{r_1}')\right|^2\right)
\\
&+3\lambda_{r_2}\left(\left|\mathcal{L}_2[u_2](\mathbf{x}_{r_2}) -\mathcal{L}_2[u_2](\mathbf{x}_{r_2}')\right|^2+\left|f_2(\mathbf{x}_{r_2}) - f_2(\mathbf{x}_{r_2}')\right|^2\right)\\
&+3\lambda_b\left(\norm{\mathcal{B}[u_2](\mathbf{x}_{b})- \mathcal{B}[u_2](\mathbf{x}_{b}')}_2^2
+\norm{\bm{g}(\mathbf{x}_{b}')- \bm{g}(\mathbf{x}_{b})}_2^2\right)
\\
&+3\lambda_{\Gamma_D}\left(\norm{\mathcal{I}_D[u_1,u_2](\mathbf{x}_{\Gamma})- \mathcal{I}_D[u_1,u_2](\mathbf{x}_{\Gamma}')}_2^2
+\norm{\bm{\varphi}(\mathbf{x}_{\Gamma})-\bm{\varphi}(\mathbf{x}_{\Gamma}')}_2^2\right)
\\
&+3\lambda_{\Gamma_N}\left(\norm{\mathcal{I}_N[u_1,u_2](\mathbf{x}_{\Gamma})- \mathcal{I}_N[u_1,u_2](\mathbf{x}_{\Gamma}')}_2^2
+\norm{\bm{\psi}(\mathbf{x}_{\Gamma})-\bm{\psi}(\mathbf{x}_{\Gamma}')}_2^2\right)\\
\leq&  3\mathbf{L}(\mathbf{x}_{r_1}',\mathbf{x}_{r_2}',\mathbf{x}_b',\mathbf{x}_\Gamma';u_1,u_2,\bm{\lambda},\bm{0})
+3\lambda_{r_1}\epsilon_{r_1}^{2}\left(\big[\mathcal{L}_1[u_1]\big]_{\Omega_1}^2+\big[f_1\big]_{\Omega_1}^2\right)
+3\lambda_{r_2}\epsilon_{r_2}^{2}\left(\big[\mathcal{L}_2[u_2]\big]_{\Omega_2}^2+\big[f_2\big]_{\Omega_2}^2\right)\\
&+3\lambda_{b}\epsilon_{b}^{2}\left(\big[\mathcal{B}[u_2]\big]_{\partial \Omega}^2
+\big[\bm{g}\big]_{\partial \Omega}^2\right)+3\lambda_{\Gamma_D}\epsilon_{\Gamma}^{2}\left(\big[\mathcal{I}_D[u_1,u_2]\big]_{\Gamma}^2
+\big[\bm{\varphi}\big]_{\Gamma}^2\right)
+3\lambda_{\Gamma_N}\epsilon_{\Gamma}^{2}\left(\big[\mathcal{I}_N[u_1,u_2]\big]_{\Gamma}^2
+\big[\bm{\psi}\big]_{\Gamma}^2\right).
\end{aligned}
\end{equation*}
For $\mathbf{x}_{r_1}^i \in \mathcal{T}_{r_1}^{m_{r_1}}$,   $\mathbf{x}_{r_2}^i \in \mathcal{T}_{r_2}^{m_{r_2}}$, $\mathbf{x}^i_{b} \in \mathcal{T}_{b}^{m_{b}}$ and $\mathbf{x}^i_{\Gamma} \in \mathcal{T}_{\Gamma}^{m_{\Gamma}}$,
we denote the Voronoi cell associated with $\mathbf{x}_{r_1}^i$, $\mathbf{x}_{r_2}^i$, $\mathbf{x}_{b}^i$, $\mathbf{x}_{\Gamma}^i$ as $A_{\mathbf{x}_{r_1}^i}$, $A_{\mathbf{x}_{r_2}^i}$, $A_{\mathbf{x}_{b}^i}$ and $A_{\mathbf{x}_{\Gamma}^i}$ , respectively,
i.e.,
\begin{equation*}
\begin{aligned}
&A_{\mathbf{x}_{r_1}^i}= \{\mathbf{x} \in \Omega_1 \ \big| \ \norm{\mathbf{x} - \mathbf{x}_{r_1}^i}_2 = \min_{\mathbf{x}' \in \mathcal{T}_{r_1}^{m_{r_1}}} \norm{\mathbf{x} - \mathbf{x}'}_2 \}, \quad 	A_{\mathbf{x}_{r_2}^i}= \{x \in \Omega_2 \ \big| \ \norm{\mathbf{x} - \mathbf{x}_{r_2}^i}_2 = \min_{\mathbf{x}' \in \mathcal{T}_{r_2}^{m_{r_2}}} \norm{\mathbf{x} - \mathbf{x}'}_2 \},
\\
&A_{\mathbf{x}^i_{b}} = \{\mathbf{x} \in \partial\Omega \ \big| \ \norm{x - \mathbf{x}^i_{b}}_2 = \min_{x' \in \mathcal{T}_{b}^{m_{b}}} \norm{\mathbf{x} - \mathbf{x}'}_2 \},\quad 	A_{\mathbf{x}^i_{\Gamma}} = \{\mathbf{x} \in \Gamma \ \big| \ \norm{\mathbf{x} - \mathbf{x}^i_{\Gamma}}_2 = \min_{x' \in \mathcal{T}_{\Gamma}^{m_{\Gamma}}} \norm{\mathbf{x} - \mathbf{x}'}_2 \},
\end{aligned}
\end{equation*}
and let $\omega^i_{r_1} = \mu_{r_1}(A_{\mathbf{x}^i_{r_1}})$ , $\omega^i_{r_2} = \mu_{r_2}(A_{\mathbf{x}^i_{r_2}})$, $\omega^i_{b} = \mu_{b}(A_{\mathbf{x}^i_{b}})$ and $\omega^i_{\Gamma} = \mu_{\Gamma}(A_{\mathbf{x}^i_{\Gamma}})$. By taking the expectation with respect to $(\mathbf{x}_{r_1},\mathbf{x}_{r_2}, \mathbf{x}_b,\mathbf{x}_\Gamma) \sim \mu = \mu_{r_1}\times \mu_{r_2}\times \mu_{b}\times \mu_{\Gamma}$,
we obtain that
\begin{equation*}
\begin{aligned}
&\mathbb{E}_{\mu}[\mathbf{L}(\mathbf{x}_{r_1},\mathbf{x}_{r_2},\mathbf{x}_b,\mathbf{x}_\Gamma;u_1,u_2,\bm{\lambda},\bm{0})]\\
=&\sum_{i=1}^{m_{\Gamma}}\sum_{j=1}^{m_{b}}\sum_{k=1}^{m_{r_1}}\sum_{l=1}^{m_{r_2}}\int_{A_{\mathbf{x}_{\Gamma}^i}}\int_{A_{\mathbf{x}_{b}^j}}\int_{A_{\mathbf{x}_{r_1}^k}}\int_{A_{\mathbf{x}_{r_2}^l}}\mathbf{L}(\mathbf{x}_{r_1},\mathbf{x}_{r_2},\mathbf{x}_b,\mathbf{x}_\Gamma;u_1,u_2,\bm{\lambda},\bm{0}) d\mu\\
\leq&
3\sum_{i=1}^{m_{\Gamma}}\sum_{j=1}^{m_{b}}\sum_{k=1}^{m_{r_1}}\sum_{l=1}^{m_{r_2}}
\omega_{\Gamma}^i\omega_{b}^j\omega_{r_1}^k\omega_{r_2}^l
\mathbf{L}(\mathbf{x}_{r_1}^k,\mathbf{x}_{r_2}^l,\mathbf{x}_{b}^j,\mathbf{x}_{\Gamma}^i;u_1,u_2,\bm{\lambda},\bm{0})
+3\lambda_{r_1}\epsilon_{r_1}^{2}\left(\big[\mathcal{L}_1[u_1]\big]_{\Omega_1}^2+\big[f_1\big]_{\Omega_1}^2\right)\\
&+3\lambda_{r_2}\epsilon_{r_2}^{2}\left(\big[\mathcal{L}_2[u_2]\big]_{\Omega_2}^2+\big[f_2\big]_{\Omega_2}^2\right)
+3\lambda_{b}\epsilon_{b}^{2}\left(\big[\mathcal{B}[u_2]\big]_{\partial \Omega}^2+\big[\bm{g}\big]_{\partial \Omega}^2\right)\\
&+3\lambda_{\Gamma_D}\epsilon_{\Gamma}^{2}\left(\big[\mathcal{I}_D[u_1,u_2]\big]_{\Gamma}^2+\big[\bm{\varphi}\big]_{\Gamma}^2\right)
+3\lambda_{\Gamma_N}\epsilon_{\Gamma}^{2}\left(\big[\mathcal{I}_N[u_1,u_2]\big]_{\Gamma}^2
+\big[\bm{\psi}\big]_{\Gamma}^2\right),
\end{aligned}
\end{equation*}
where we have used the fact that $\sum_{i=1}^{m_{r_1}} \omega^i_{r_1} = 1$, $\sum_{i=1}^{m_{r_2}} \omega^i_{r_2} = 1$,
$\sum_{i=1}^{m_{b}} \omega^i_{b} = 1$ and $\sum_{i=1}^{m_{\Gamma}} \omega^i_{\Gamma} = 1$.

Next, we give the estimation of the first term on the right. Taking $\omega_{r_1}^{m_{r_1},*} = \max_i \omega_{r_1}^i$, $\omega_{r_2}^{m_{r_2},*} = \max_i \omega_{r_2}^i$, $\omega_{b}^{m_{b},*} = \max_i \omega^i_{b}$ and $\omega_{\Gamma}^{m_{\Gamma},*} = \max_i \omega^i_{\Gamma}$,
yields that
\begin{equation}\label{eq:proof of 4.1 eq}
\begin{aligned}
&3\sum_{i=1}^{m_{\Gamma}}\sum_{j=1}^{m_{b}}\sum_{k=1}^{m_{r_1}}\sum_{l=1}^{m_{r_2}}
\omega_{\Gamma}^i\omega_{b}^j\omega_{r_1}^k\omega_{r_2}^l
\mathbf{L}(\mathbf{x}_{r_1}^k,\mathbf{x}_{r_2}^l,\mathbf{x}_{b}^j,\mathbf{x}_{\Gamma}^i;u_1,u_2,\bm{\lambda},\bm{0})\\
\leq&
3m_{r_1} \omega_{r_1}^{m_{r_1},*}  \cdot \frac{\lambda_{r_1}}{m_{r_1}}\sum_{i=1}^{m_{r_1}} \left|\mathcal{L}_1[u_1](\mathbf{x}_{r_1}^i) - f_1(\mathbf{x}_{r_1}^i)\right|^2
+3m_{r_2} \omega_{r_2}^{m_{r_2},*}  \cdot \frac{\lambda_{r_2}}{m_{r_2}}\sum_{i=1}^{m_{r_2}} \left|\mathcal{L}_2[u_2](\mathbf{x}_{r_2}^i) - f_2(\mathbf{x}_{r_2}^i)\right|^2\\
& +3m_{\Gamma}\omega_{\Gamma}^{m_{\Gamma},*}
\cdot \left(\frac{\lambda_{\Gamma_D} }{m_{\Gamma}}\sum_{i=1}^{m_{\Gamma}} \norm{\mathcal{I}_D[u_1,u_2](\mathbf{x}_{\Gamma}^i) - \bm{\varphi}(\mathbf{x}_{\Gamma}^i)}_2^2+\frac{\lambda_{\Gamma_N} }{m_{\Gamma}}\sum_{i=1}^{m_{\Gamma}}\norm{\mathcal{I}_N[u_1,u_2](\mathbf{x}_{\Gamma}^i) - \bm{\psi}(\mathbf{x}_{\Gamma}^i)}_2^2\right)\\
& +3m_{b}\omega_{b}^{m_{b},*}
\cdot \frac{\lambda_{b} }{m_{b}}\sum_{i=1}^{m_{b}} \norm{\mathcal{B}[u_2](\mathbf{x}_{b}^i) - \bm{g}(\mathbf{x}_{b}^i)}_2^2,
\end{aligned}
\end{equation}
where we have used the fact that $m_{r_1}\omega_{r_1}^{m_{r_1},*}, m_{r_2}\omega_{r_2}^{m_{r_2},*}, m_{b}\omega_{b}^{m_{b},*}, m_{\Gamma}\omega_{\Gamma}^{m_{\Gamma},*} \ge 1$.

Let $B_\epsilon(\mathbf{x})$ be a closed ball centered at $\mathbf{x}$ with radius $\epsilon$. Let $P_{r_1}^* = \max_{\mathbf{x} \in \Omega_1} \mu_{r_1}(B_{\epsilon_{r_1}}(\mathbf{x}) \cap \Omega_1)$, $P_{r_2}^* = \max_{\mathbf{x} \in \Omega_2} \mu_{r_2}(B_{\epsilon_{r_2}}(\mathbf{x}) \cap \Omega_2)$, $P_{b}^* = \max_{\mathbf{x} \in \partial\Omega} \mu_{b}(B_{\epsilon_{b}}(\mathbf{x}) \cap \partial\Omega)$ and $P_{\Gamma}^* = \max_{\mathbf{x} \in \Gamma} \mu_{\Gamma}(B_{\epsilon_{\Gamma}}(\mathbf{x}) \cap \Gamma)$. Then for any $\mathbf{x}_{r_1} \in \Omega_1$, $\mathbf{x}_{r_2} \in \Omega_2$, $\mathbf{x}_{b} \in \partial\Omega$ and $\mathbf{x}_{\Gamma} \in \Gamma$, there exists $\mathbf{x}_{r_1}' \in \mathcal{T}_{r_1}$, $\mathbf{x}_{r_2}' \in \mathcal{T}_{r_2}$, $\mathbf{x}_{b}' \in \mathcal{T}_{b}$ and $\mathbf{x}_{\Gamma}' \in \mathcal{T}_{\Gamma}$ such that $\norm{\mathbf{x}_{r_1}- \mathbf{x}_{r_1}'}_2 \le \epsilon_{r_1}$, $\norm{\mathbf{x}_{r_2}- \mathbf{x}_{r_2}'}_2 \le \epsilon_{r_2}$, $\norm{\mathbf{x}_b - \mathbf{x}_b'}_2 \le \epsilon_b$ and $\norm{\mathbf{x}_{\Gamma} - \mathbf{x}_{\Gamma}'}_2 \le \epsilon_{\Gamma}$ for each $i$, there are closed balls $B_{\epsilon_{r_1}}$, $B_{\epsilon_{r_2}}$, $B_{\epsilon_{b}}$ and $B_{\epsilon_{\Gamma}}$ that include $A_{\mathbf{x}_{r_1}^i}$, $A_{\mathbf{x}_{r_2}^i}$, $A_{\mathbf{x}_{b}^i}$ and $A_{\mathbf{x}_{\Gamma}^i}$, respectively. These facts, together with Assumption \ref{assumption:data-dist} imply that
\begin{equation}\label{eq:proof of 4.1 eq2}
\omega_{r_1}^{m_{r_1},*} \le P_{r_1}^* \le C_{r_1}\epsilon_{r_1}^{d},\
\omega_{r_2}^{m_{r_2},*} \le P_{r_2}^* \le C_{r_2}\epsilon_{r_2}^{d},
\
\omega_{b}^{m_{b},*} \le P_{b}^* \le C_{b}\epsilon_{b}^{d-1},
\
\omega_{\Gamma}^{m_{\Gamma},*} \le P_{\Gamma}^* \le C_{\Gamma}\epsilon_{\Gamma}^{d-1}.
\end{equation}
With the estimations (\ref{eq:proof of 4.1 eq}) and (\ref{eq:proof of 4.1 eq2}), we obtain that
\begin{equation*}
\begin{aligned}
&\mathbb{E}_{\mu}[\mathbf{L}(\mathbf{x}_{r_1},\mathbf{x}_{r_2},\mathbf{x}_b,\mathbf{x}_\Gamma;u_1,u_2,\bm{\lambda},\bm{0})]
\\
\leq& 3C_{r_1}m_{r_1}\epsilon_{r_1}^{d}  \cdot \frac{\lambda_{r_1}}{m_{r_1}}\sum_{i=1}^{m_{r_1}} \left|\mathcal{L}_1[u_1](\mathbf{x}_{r_1}^i) - f_1(\mathbf{x}_{r_1}^i)\right|^2+	3C_{r_2}m_{r_2}\epsilon_{r_2}^{d}  \cdot \frac{\lambda_{r_2}}{m_{r_2}}\sum_{i=1}^{m_{r_2}} \left|\mathcal{L}_2[u_2](\mathbf{x}_{r_2}^i) - f_2(\mathbf{x}_{r_2}^i)\right|^2
\\
& +3C_\Gamma m_{\Gamma}\epsilon_{\Gamma}^{d-1}\cdot\left( \frac{\lambda_{\Gamma_D} }{m_{\Gamma}}\sum_{i=1}^{m_{\Gamma}} \norm{\mathcal{I}_D[u_1,u_2](\mathbf{x}_{\Gamma}^i) - \bm{\varphi}(\mathbf{x}_{\Gamma}^i)}_2^2 + \frac{\lambda_{\Gamma_N} }{m_{\Gamma}}\sum_{i=1}^{m_{\Gamma}} \norm{\mathcal{I}_N[u_1,u_2](\mathbf{x}_{\Gamma}^i) - \bm{\psi}(\mathbf{x}_{\Gamma}^i)}_2^2\right)
\\
& +3C_{b}m_{b}\epsilon_{b}^{d-1}
\cdot \frac{\lambda_{b} }{m_{b}}\sum_{j=1}^{m_{b}} \norm{\mathcal{B}[u_2](\mathbf{x}_{b}^j) - \bm{g}(\mathbf{x}_{b}^j)}_2^2
+3\lambda_{r_1}\epsilon_{r_1}^{2}\left(\big[\mathcal{L}_1[u_1]\big]_{\Omega_1}^2+\big[f_1\big]_{\Omega_1}^2\right)\\
&+3\lambda_{r_2}\epsilon_{r_2}^{2}\left(\big[\mathcal{L}_2[u_2]\big]_{\Omega_2}^2+\big[f_2\big]_{\Omega_2}^2\right)
+3\lambda_{b}\epsilon_{b}^{2}\left(\big[\mathcal{B}[u_2]\big]_{\partial \Omega}^2+\big[\bm{g}\big]_{\partial \Omega}^2\right)\\ &+3\lambda_{\Gamma_D}\epsilon_{\Gamma}^{2}\left(\big[\mathcal{I}_D[u_1,u_2]\big]_{\Gamma}^2
+\big[\bm{\varphi}\big]_{\Gamma}^2\right)
+3\lambda_{\Gamma_N}\epsilon_{\Gamma}^{2}\left(\big[\mathcal{I}_N[u_1,u_2]\big]_{\Gamma}^2
+\big[\bm{\psi}\big]_{\Gamma}^2\right)
.
\end{aligned}
\end{equation*}
Finally,  we conclude the proof by taking
$
C_{\bm{m}} = 3\max\{C_{r_1}m_{r_1}\epsilon_{r_1}^{d}, C_{r_2}m_{r_2}\epsilon_{r_2}^{d},
C_{b}m_{b}\epsilon_{b}^{d-1}, C_{\Gamma}m_{\Gamma}\epsilon_{\Gamma}^{d-1} \}
$.
\end{proof}

With Lemma \ref{app:lemma-lip} and Assumption \ref{assumption:data-dist}, we are able to  quantify the generalization error and provide an upper bound of the expected unregularized PINN loss (\ref{eq: the expected loss}).
\begin{lemma} \label{lem:gen}
Suppose Assumption \ref{assumption:data-dist} holds. Suppose that $u_1, u_2$ satisfy
\begin{equation*}
R_{r_1}(u_1)  < \infty, \
R_{r_2}(u_2)  < \infty, \
R_{b}(u_2) < \infty,  \
R_{\Gamma_D}(u_1,u_2)  < \infty, \
R_{\Gamma_N}(u_1,u_2)  < \infty,
\end{equation*}
and $f_1, f_2, \bm{\psi}, \bm{\varphi}, \bm{g}$ satisfy
\begin{equation*}
\big[f_1\big]_{\Omega_1},\  \big[f_2\big]_{\Omega_2},\ \big[\bm{g}\big]_{\partial\Omega}, \ \big[\bm{\varphi}\big]_{\Gamma},\  \big[\bm{\psi}\big]_{\Gamma}  < \infty.
\end{equation*}
Let $m_{r_1}$, $m_{r_2}$, $m_b$ and $m_{\Gamma}$ be the number of iid samples from $\mu_{r_1}$, $\mu_{r_2}$, $\mu_{b}$ and $\mu_{\Gamma}$, respectively. Let $\bm{\lambda} = (\lambda_{r_1},\lambda_{r_2},\lambda_{b},\lambda_{
\Gamma_D},\lambda_{\Gamma_N})$ be a fixed vector. Then, with probability at least,
\begin{equation*}
P = P(m_{r_1})P(m_{r_2})P(m_{b})P(m_{\Gamma}), \quad\text{where } P(m) = (1 - \sqrt{m}(1-1/\sqrt{m})^{m}),
\end{equation*}
we have
\begin{equation*}
\text{Loss}^{\text{PINN}}(u_1,u_2;\bm{\lambda})
\le C_{\bm{m}}\cdot \text{Loss}_{\bm{m}}(u_1,u_2;\bm{\lambda}, \bm{\hat{\lambda}}_{\bm{m}}^R)
+C'(m_{r_1}^{-\frac{1}{d}} +  m_{r_2}^{-\frac{1}{d}} + m_{b}^{-\frac{1}{d-1}}+m_{\Gamma}^{-\frac{1}{d-1}}).
\end{equation*}
Here, $\bm{\hat{\lambda}}_{\bm{m}}^R = (\hat{\lambda}_{r_1,\bm{m}}^R,\hat{\lambda}_{r_2,\bm{m}}^R,\hat{\lambda}_{b,\bm{m}}^R,\hat{\lambda}_{\Gamma_D,\bm{m}}^R, \hat{\lambda}_{\Gamma_N,\bm{m}}^R)$. Specifically,
\begin{equation*}
\begin{aligned}
    &\hat{\lambda}_{r_1,\bm{m}}^R = \frac{3\lambda_{r_1} dc_{r_1}^{-\frac{2}{d}}}{C_{\bm{m}}}\cdot m_{r_1}^{-\frac{1}{d}},
    \quad
    \hat{\lambda}_{r_2,\bm{m}}^R = \frac{3\lambda_{r_2} dc_{r_2}^{-\frac{2}{d}}}{C_{\bm{m}}}\cdot m_{r_2}^{-\frac{1}{d}},
    \quad \hat{\lambda}_{b,\bm{m}}^R = \frac{3\lambda_{b} d c_{b}^{-\frac{2}{d-1}} }{C_{\bm{m}}}\cdot m_{b}^{-\frac{1}{d-1}},\\
    &\hat{\lambda}_{\Gamma_D,\bm{m}}^R = \frac{3\lambda_{\Gamma_D}d c_{\Gamma}^{-\frac{2}{d-1}} }{C_{\bm{m}}}\cdot m_{\Gamma}^{-\frac{1}{d-1}},
    \quad \hat{\lambda}_{\Gamma_N,\bm{m}}^R = \frac{3\lambda_{\Gamma_N} d c_{\Gamma}^{-\frac{2}{d-1}} }{C_{\bm{m}}}\cdot m_{\Gamma}^{-\frac{1}{d-1}}.
    \end{aligned}
\end{equation*}
$C_{\bm{m}} = 3\max\{\kappa_{r_1} \sqrt{d}^{d} m_{r_1}^{\frac{1}{2}}, \kappa_{r_2} \sqrt{d}^{d} m_{r_2}^{\frac{1}{2}}, \kappa_{b} \sqrt{d}^{d-1} m_{b}^{\frac{1}{2}}, \kappa_{\Gamma} \sqrt{d}^{d-1} m_{\Gamma}^{\frac{1}{2}}\}$
where $\kappa_{r_1} = \frac{C_{r_1}}{c_{r_1}}$, $\kappa_{r_2} = \frac{C_{r_2}}{c_{r_2}}$, $\kappa_{b} = \frac{C_{b}}{c_{b}}$, $\kappa_{\Gamma} = \frac{C_{\Gamma}}{c_{\Gamma}}$. And
$C'$ is a constant that depends only on $\bm{\lambda}$, $d$, $c_{r_1}$, $c_{r_2}$, $c_{b}$, $c_{\Gamma}$, $f_1$, $f_2$, $\bm{g}$, $\bm{\varphi}$, $\bm{\psi}$.
\end{lemma}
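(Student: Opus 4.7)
The plan is to combine Lemma \ref{app:lemma-lip} with a probability space-filling argument in order to control, with high probability, how densely iid samples cover the four sampling domains. The first task is to exhibit sizes $\epsilon_{r_1}, \epsilon_{r_2}, \epsilon_b, \epsilon_\Gamma$ for which the $m_{r_1}, m_{r_2}, m_b, m_\Gamma$ iid samples form $\epsilon$-nets of $\Omega_1, \Omega_2, \partial\Omega, \Gamma$ (with respect to $\norm{\cdot}_2$), so that the hypothesis of Lemma \ref{app:lemma-lip} is satisfied.

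For $\Omega_1$, I would invoke Assumption \ref{assumption:data-dist}(2,3) to take the partition $\{\Omega_{1,j}^{\epsilon_{r_1}}\}_{j=1}^{K_{r_1}}$, where each cell fits inside a cube of side length $\epsilon_{r_1}$ and has $\mu_{r_1}$-mass at least $c_{r_1}\epsilon_{r_1}^d$. Choosing $\epsilon_{r_1}^d = 1/(c_{r_1}\sqrt{m_{r_1}})$, a union bound yields that the probability of some cell being empty is at most $K_{r_1}(1 - 1/\sqrt{m_{r_1}})^{m_{r_1}}$, and $K_{r_1} \le \sqrt{m_{r_1}}$ follows from the same measure normalization. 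Hence with probability at least $P(m_{r_1}) = 1 - \sqrt{m_{r_1}}(1-1/\sqrt{m_{r_1}})^{m_{r_1}}$, every point of $\Omega_1$ lies within $\sqrt{d}\,\epsilon_{r_1}$ of some $\mathbf{x}_{r_1}^i$. An identical argument on $\Omega_2$ and surface-measure analogues on $\partial\Omega, \Gamma$ (with the exponent $d$ replaced by $d-1$) together with independence give the product probability $P = P(m_{r_1})P(m_{r_2})P(m_b)P(m_\Gamma)$ that all four $\epsilon$-net conditions hold simultaneously.

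On this event I would plug the chosen radii, namely $\epsilon_{r_1} = \sqrt{d}(c_{r_1}\sqrt{m_{r_1}})^{-1/d}$ and their analogues, into Lemma \ref{app:lemma-lip}. A direct calculation shows $C_{r_1} m_{r_1}\epsilon_{r_1}^d = \kappa_{r_1}\sqrt{d}^{d}\sqrt{m_{r_1}}$ and $\lambda_{r_1}\epsilon_{r_1}^2 = \lambda_{r_1} d\, c_{r_1}^{-2/d} m_{r_1}^{-1/d}$, and similarly for the other three indices, so the prefactor delivered by Lemma \ref{app:lemma-lip} matches exactly the $C_{\bm{m}}$ defined in the statement. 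The Lipschitz penalty on the network side, $3\lambda_{r_1}\epsilon_{r_1}^2\big[\mathcal{L}_1[u_1]\big]^2_{\Omega_1}$, is by construction equal to $C_{\bm{m}}\,\hat{\lambda}_{r_1,\bm{m}}^R R_{r_1}(u_1)$, and analogously for the boundary, interface, and $\Omega_2$ terms; these combine with $\text{Loss}^{\text{PINN}}_{\bm{m}}(u_1,u_2;\bm{\lambda})$ to form $\text{Loss}_{\bm{m}}(u_1,u_2;\bm{\lambda}, \bm{\hat{\lambda}}_{\bm{m}}^R)$ multiplied by $C_{\bm{m}}$. The remaining terms $3\lambda_{r_1}\epsilon_{r_1}^2 [f_1]^2_{\Omega_1}$ etc., which depend only on the fixed data, decay as $m_{r_1}^{-1/d}$, $m_{r_2}^{-1/d}$, $m_b^{-1/(d-1)}$, $m_\Gamma^{-1/(d-1)}$, and are absorbed into the constant $C'$ specified in the statement.

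The main obstacle is purely bookkeeping: the radii $\epsilon_\bullet$ must be selected so that three quantities line up simultaneously, namely the coverage probability $P(m_\bullet)$ with the exponent $1/\sqrt{m}$, the prefactor $C_{r_1} m_{r_1}\epsilon_{r_1}^d$ with the stated $C_{\bm{m}}$, and the product $C_{\bm{m}}\hat{\lambda}_{r_1,\bm{m}}^R$ with $3\lambda_{r_1}\epsilon_{r_1}^2$. A minor subtlety is that Assumption \ref{assumption:data-dist}(3) is used twice with the same $\epsilon$ but for two conceptually different purposes, a lower bound on cell mass to force coverage and an upper bound on ball mass to control $\omega^{m,*}$ inside Lemma \ref{app:lemma-lip}; both need to hold for the same chosen $\epsilon$, which is automatic given the form of the assumption.
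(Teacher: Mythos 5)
Your proposal is correct and follows essentially the same route as the paper: the paper simply cites Lemma B.2 of Shin et al.\ for the covering probability $P(m)=1-\sqrt{m}(1-1/\sqrt{m})^{m}$, whereas you re-derive it via the partition/union-bound argument, and then both proofs plug $\epsilon_{r_1}=\sqrt{d}\,c_{r_1}^{-1/d}m_{r_1}^{-1/(2d)}$ (and analogues) into Lemma \ref{app:lemma-lip} so that the prefactors match $C_{\bm{m}}$ and the Lipschitz terms match $C_{\bm{m}}\hat{\lambda}^R_{\bullet,\bm{m}}R_\bullet$. Your bookkeeping checks out, including the identification $C_{r_1}m_{r_1}\epsilon_{r_1}^d=\kappa_{r_1}\sqrt{d}^{\,d}m_{r_1}^{1/2}$ and the absorption of the data-dependent remainder into $C'$.
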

\begin{proof}
Since $\mathcal{T}_{r_1} = \{\mathbf{x}_{r_1}^i\}_{i=1}^{m_{r_1}}$ be iid samples from $\mu_{r_1}$ on $\Omega_1$, $\mathcal{T}_{r_2} = \{\mathbf{x}_{r_2}^i\}_{i=1}^{m_{r_2}}$ be iid samples from $\mu_{r_2}$ on $\Omega_2$, $\mathcal{T}_{b} = \{\mathbf{x}_{b}^i\}_{i=1}^{m_{b}}$ be iid samples from $\mu_{b}$ on $\partial\Omega$
and $\mathcal{T}_{\Gamma} = \{\mathbf{x}_{\Gamma}^i\}_{i=1}^{m_{\Gamma}}$ be iid samples from $\mu_{\Gamma}$ on $\Gamma$, respectively, therefore, by Lemma B.2 in \cite{shin2020on}, with probability at least
\begin{equation} \label{app:thm-prob}
P = P(m_{r_1})P(m_{r_2})P(m_{b})P(m_{\Gamma}), \quad\text{where } P(m) = (1 - \sqrt{m}(1-1/\sqrt{m})^{m}),
\end{equation}
for $\forall \mathbf{x}_{r_1} \in \Omega_1$, $\forall \mathbf{x}_{r_2} \in \Omega_2$, $\forall \mathbf{x}_b \in \partial\Omega$ and $\forall \mathbf{x}_\Gamma \in \Gamma$,
there exists $\mathbf{x}_{r_1}' \in \mathcal{T}_{r_1}^{m_{r_1}}$, $\mathbf{x}_{r_2}' \in \mathcal{T}_{r_2}^{m_{r_2}}$, $\mathbf{x}_{b}' \in \mathcal{T}_{b}^{m_b}$ and $\mathbf{x}_{\Gamma}' \in \mathcal{T}_{\Gamma}^{m_\Gamma}$ such that
$\norm{\mathbf{x}_{r_1}- \mathbf{x}_{r_1}'}_2  \le \sqrt{d}c_{r_1}^{-\frac{1}{d}}m_{r_1}^{-\frac{1}{2d}}$, $\norm{\mathbf{x}_{r_2}- \mathbf{x}_{r_2}'}_2  \le \sqrt{d}c_{r_2}^{-\frac{1}{d}}m_{r_2}^{-\frac{1}{2d}}$, $\norm{\mathbf{x}_b - \mathbf{x}_b' }_2\le \sqrt{d}c^{-\frac{1}{d-1}}_{b}m_{b}^{-\frac{1}{2(d-1)}}$ and
$\norm{\mathbf{x}_{\Gamma} - \mathbf{x}_{\Gamma}'}_2 \le \sqrt{d}c^{-\frac{1}{d-1}}_{\Gamma}m_{\Gamma}^{-\frac{1}{2(d-1)}}$.

Using Lemma \ref{app:lemma-lip}, together with taking $\epsilon_{r_1} = \sqrt{d}c_{r_1}^{-\frac{1}{d}}m_{r_1}^{-\frac{1}{2d}}$, $\epsilon_{r_2} = \sqrt{d}c_{r_2}^{-\frac{1}{d}}m_{r_2}^{-\frac{1}{2d}}$, $\epsilon_{b} = \sqrt{d}c^{-\frac{1}{d-1}}_{b}m_{b}^{-\frac{1}{2(d-1)}}$
and $\epsilon_{\Gamma} = \sqrt{d}c^{-\frac{1}{d-1}}_{\Gamma}m_{\Gamma}^{-\frac{1}{2(d-1)}}$, implies
that with probability at least \eqref{app:thm-prob},
\begin{equation*}
\begin{aligned}
\text{Loss}^{\text{PINN}}(u_1,u_2;\bm{\lambda})
\leq&
C_{\bm{m}} \cdot \text{Loss}_{\bm{m}}^{\text{PINN}}(u_1,u_2;\bm{\lambda})+ Q\\
& +C_{\bm{m}} \cdot \left[\hat{\lambda}_{r_1,\bm{m}}^R \cdot \big[\mathcal{L}_1[u_1]\big]_{\Omega_1}^2
+\hat{\lambda}_{r_2,\bm{m}}^R \cdot \big[\mathcal{L}_2[u_2]\big]_{\Omega_2}^2
+ \hat{\lambda}_{b,\bm{m}}^R \cdot \big[\mathcal{B}[u_2]\big]_{\partial\Omega}^2
\right] \\
& +C_{\bm{m}} \cdot\left[\hat{\lambda}_{\Gamma_D,\bm{m}}^R \cdot \big[\mathcal{I}_D[u_1,u_2]\big]_{\Gamma}^2 + \hat{\lambda}_{\Gamma_N,\bm{m}}^R \cdot \big[\mathcal{I}_N[u_1,u_2]\big]_{\Gamma}^2\right] ,
\end{aligned}
\end{equation*}
where
\begin{equation*}
\begin{aligned}
Q =&3\lambda_{r_1} d c_{r_1}^{-\frac{2}{d}}m_{r_1}^{-\frac{1}{d}}\big[f_1\big]_{\Omega_1}^2
+ 3\lambda_{r_2} d c_{r_2}^{-\frac{2}{d}}m_{r_2}^{-\frac{1}{d}}\big[f_2\big]_{\Omega_2}^2  +3\lambda_{b} d c_{b}^{-\frac{2}{d-1}}m_{b}^{-\frac{1}{d-1}}\big[\bm{g}\big]_{\partial\Omega}^2\\
&+3 d c_{\Gamma}^{-\frac{2}{d-1}}m_{\Gamma}^{-\frac{1}{d-1}}\left(\lambda_{\Gamma_D}\big[\bm{\varphi}\big]_{\Gamma}^2 +\lambda_{\Gamma_N}\big[\bm{\psi}\big]_{\Gamma}^2\right),
\end{aligned}
\end{equation*}
and
\begin{equation*}
\begin{aligned}
    &C_{\bm{m}} = 3\max\{\frac{C_{r_1}}{c_{r_1}}\sqrt{d}^{d} m_{r_1}^{\frac{1}{2}},  \frac{C_{r_2}}{c_{r_2}}\sqrt{d}^{d} m_{r_2}^{\frac{1}{2}}, \frac{C_{b}}{c_{b}} \sqrt{d}^{d-1} m_{b}^{\frac{1}{2}}, \frac{C_{\Gamma}}{c_{\Gamma}} \sqrt{d}^{d-1} m_{\Gamma}^{\frac{1}{2}}\},\\
    &\hat{\lambda}_{r_1,\bm{m}}^R = \frac{3\lambda_{r_1} dc_{r_1}^{-\frac{2}{d}}}{C_{\bm{m}}}\cdot m_{r_1}^{-\frac{1}{d}},
    \quad
    \hat{\lambda}_{r_2,\bm{m}}^R = \frac{3\lambda_{r_2} dc_{r_2}^{-\frac{2}{d}}}{C_{\bm{m}}}\cdot m_{r_2}^{-\frac{1}{d}},
    \quad
    \hat{\lambda}_{b,\bm{m}}^R = \frac{3\lambda_{b} d c_{b}^{-\frac{2}{d-1}} }{C_{\bm{m}}}\cdot m_{b}^{-\frac{1}{d-1}},\\
    &\hat{\lambda}_{\Gamma_D,\bm{m}}^R = \frac{3\lambda_{\Gamma_D} d c_{\Gamma}^{-\frac{2}{d-1}} }{C_{\bm{m}}}\cdot m_{\Gamma}^{-\frac{1}{d-1}},
    \quad \hat{\lambda}_{\Gamma_N,\bm{m}}^R = \frac{3\lambda_{\Gamma_N} d c_{\Gamma}^{-\frac{2}{d-1}} }{C_{\bm{m}}}\cdot m_{\Gamma}^{-\frac{1}{d-1}}.
    \end{aligned}
\end{equation*}
By taking
\begin{equation*}
\begin{aligned}
C' = 3\max &\{\lambda_{r_1} d c_{r_1}^{-\frac{2}{d}}\big[f_1\big]_{\Omega_1}^2, \lambda_{r_2} d c_{r_2}^{-\frac{2}{d}}\big[f_2\big]_{\Omega_2}^2, \lambda_{b}d c_{b}^{-\frac{2}{d-1}}\big[\bm{g}\big]_{\partial\Omega}^2, d c_{\Gamma}^{-\frac{2}{d-1}}\left(\lambda_{\Gamma_D}\big[\bm{\varphi}\big]_{\Gamma}^2 +\lambda_{\Gamma_N}\big[\bm{\psi}\big]_{\Gamma}^2 \right) \},
\end{aligned}
\end{equation*}
we conclude that
\begin{equation*}
\text{Loss}^{\text{PINN}}(u_1,u_2;\bm{\lambda})
\le C_{\bm{m}}\cdot \text{Loss}_{\bm{m}}(u_1,u_2;\bm{\lambda}, \bm{\hat{\lambda}}_{\bm{m}}^R)
+C'(m_{r_1}^{-\frac{1}{d}} +  m_{r_2}^{-\frac{1}{d}} + m_{b}^{-\frac{1}{d-1}}+m_{\Gamma}^{-\frac{1}{d-1}}),
\end{equation*}
where $\bm{\hat{\lambda}}_{\bm{m}}^R = (\hat{\lambda}_{r_1,\bm{m}}^R,\hat{\lambda}_{r_2,\bm{m}}^R,\hat{\lambda}_{b,\bm{m}}^R,\hat{\lambda}_{\Gamma_D,\bm{m}}^R, \hat{\lambda}_{\Gamma_N,\bm{m}}^R)$. The proof is completed.
\end{proof}

Using Lemma \ref{lem:gen}, we will show that the expected PINN loss \eqref{eq: the expected loss} at the minimizers of the Lipschitz regularized empirical loss \eqref{def: GE LIP PINN empirical loss} converges to zero according to Assumptions \ref{assumption:convergence}.

\begin{lemma} \label{lem:conv-loss}
Suppose Assumptions \ref{assumption:data-dist} and  \ref{assumption:convergence} hold. Let $m_{r_1}$, $m_{r_2}$, $m_{\Gamma}$ and $m_{b}$ be the number of iid samples from
$\mu_{r_1}$, $\mu_{r_2}$, $\mu_{\Gamma}$ and $\mu_{b}$, respectively,
and satisfy $m_{r_2} = \mathcal{O}(m_{r_1})$, $m_{\Gamma} = \mathcal{O}(m_{r_1}^{\frac{d-1}{d}})$, $m_{b} = \mathcal{O}(m_{r_1}^{\frac{d-1}{d}})$.
Let $\bm{\lambda}_{\bm{m}}^R$ be a vector satisfying
\begin{equation*}
    \bm{\lambda}_{\bm{m}}^R \ge \bm{\hat{\lambda}}_{\bm{m}}^R, \qquad
    \norm{\bm{\lambda}_{\bm{m}}^R}_\infty = \mathcal{O}(\norm{\bm{\hat{\lambda}}_{\bm{m}}^R}_\infty),
\end{equation*}
where $\bm{\hat{\lambda}}_{\bm{m}}^R = (\hat{\lambda}_{r_1,\bm{m}}^R,\ \hat{\lambda}_{r_2,\bm{m}}^R,\ \hat{\lambda}_{b,\bm{m}}^R, \ \hat{\lambda}_{\Gamma_D,\bm{m}}^R,\ \hat{\lambda}_{\Gamma_N,\bm{m}}^R)$ are defined in Lemma \ref{lem:gen}. Let $(u_{1,\bm{m}},u_{2,\bm{m}}) \in (\mathcal{H}_{1,\bm{m}}, \mathcal{H}_{2,\bm{m}})$ be a minimizer of the Lipschitz regularized empirical loss $\text{Loss}_{m}(\cdot;\bm{\lambda}, \bm{\lambda}_{\bm{m}}^R)$ \eqref{def: GE LIP PINN empirical loss}.
Then the following holds:
\begin{itemize}
\item With probability at least
$$P = P(m_{r_1})P(m_{r_2})P(m_{b})P(m_{\Gamma}), \quad\text{where } P(m) = (1 - \sqrt{m}(1-1/\sqrt{m})^{m})$$ over iid samples,
\begin{equation*}
\text{Loss}^{\text{PINN}}(u_{1,\bm{m}},u_{2,\bm{m}};\bm{\lambda}) = \mathcal{O}(m_{r_1}^{-\frac{1}{d}}).
\end{equation*}
\item With probability 1 over iid samples,
\begin{equation*}
\begin{aligned}\label{thm:l2-convergence}
&\lim_{m_{r_1} \to \infty} \mathcal{L}[u_{1,\bm{m}}] = f_1 \text{ in } L^2(\Omega_1),
\quad
\lim_{m_{r_1} \to \infty} \mathcal{L}[u_{2,\bm{m}}] = f_2 \text{ in } L^2(\Omega_2),
\quad
\lim_{m_{r_1} \to \infty} \mathcal{B}[u_{2,\bm{m}}] = \bm{g} \text{ in } L^2(\partial \Omega),
\\	
&\lim_{m_{r_1} \to \infty} \mathcal{I}_D[u_{1, \bm{m}},u_{2, \bm{m}}] = \bm{\varphi} \text{ in } L^2(\Gamma),
\quad
\lim_{m_{r_1} \to \infty} \mathcal{I}_N[u_{1, \bm{m}},u_{2, \bm{m}}] = \bm{\psi} \text{ in } L^2(\Gamma).
\end{aligned}
\end{equation*}
\end{itemize}
\end{lemma}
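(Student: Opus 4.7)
\medskip

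The plan is to chain together three facts: the generalization inequality of Lemma \ref{lem:gen}, the optimality of $(u_{1,\bm{m}},u_{2,\bm{m}})$ as a minimizer of $\text{Loss}_{\bm{m}}(\cdot;\bm{\lambda},\bm{\lambda}_{\bm{m}}^R)$, and the rate estimates from Assumption \ref{assumption:convergence}(3)--(4). First, I would apply Lemma \ref{lem:gen} directly to the minimizer, which gives, with probability at least $P$,
\begin{equation*}
\text{Loss}^{\text{PINN}}(u_{1,\bm{m}},u_{2,\bm{m}};\bm{\lambda}) \le C_{\bm{m}}\cdot \text{Loss}_{\bm{m}}(u_{1,\bm{m}},u_{2,\bm{m}};\bm{\lambda}, \bm{\hat{\lambda}}_{\bm{m}}^R) + C'\bigl(m_{r_1}^{-\frac{1}{d}} + m_{r_2}^{-\frac{1}{d}} + m_b^{-\frac{1}{d-1}} + m_\Gamma^{-\frac{1}{d-1}}\bigr).
\end{equation*}
The sample size conditions $m_{r_2}=\mathcal{O}(m_{r_1})$, $m_b,m_\Gamma=\mathcal{O}(m_{r_1}^{(d-1)/d})$ collapse the second term to $\mathcal{O}(m_{r_1}^{-1/d})$, and they also yield $C_{\bm{m}}=\mathcal{O}(m_{r_1}^{1/2})$.

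Next, since $\bm{\lambda}_{\bm{m}}^R\geq\bm{\hat{\lambda}}_{\bm{m}}^R$ termwise and $(u_{1,\bm{m}},u_{2,\bm{m}})$ is a minimizer, I would invoke the remark after Theorem \ref{thm:main-elliptic} to obtain
\begin{equation*}
\text{Loss}_{\bm{m}}(u_{1,\bm{m}},u_{2,\bm{m}};\bm{\lambda},\bm{\hat{\lambda}}_{\bm{m}}^R) \le \text{Loss}_{\bm{m}}(u_{1,\bm{m}},u_{2,\bm{m}};\bm{\lambda},\bm{\lambda}_{\bm{m}}^R) \le \text{Loss}_{\bm{m}}(\hat{u}_{1,\bm{m}},\hat{u}_{2,\bm{m}};\bm{\lambda},\bm{\lambda}_{\bm{m}}^R).
\end{equation*}
Splitting the right-hand side into its empirical PINN part and its regularization part, Assumption \ref{assumption:convergence}(3) controls the former by $\mathcal{O}(\max\{m_{r_1},m_{r_2},m_\Gamma^{d/(d-1)},m_b^{d/(d-1)}\}^{-1/2-1/d})=\mathcal{O}(m_{r_1}^{-1/2-1/d})$, while Assumption \ref{assumption:convergence}(4) together with $\norm{\bm{\lambda}_{\bm{m}}^R}_\infty=\mathcal{O}(\norm{\bm{\hat{\lambda}}_{\bm{m}}^R}_\infty)$ controls the latter by $\mathcal{O}(\norm{\bm{\hat{\lambda}}_{\bm{m}}^R}_\infty)$. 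Because $\hat{\lambda}_{r_1,\bm{m}}^R\asymp C_{\bm{m}}^{-1}m_{r_1}^{-1/d}$ and analogously for the other components, multiplying through by $C_{\bm{m}}=\mathcal{O}(m_{r_1}^{1/2})$ converts both contributions into $\mathcal{O}(m_{r_1}^{-1/d})$, establishing the first bullet.

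For the almost-sure statement, I would argue via Borel--Cantelli. The failure probability for the event in the first bullet at stage $m_{r_1}$ is at most $1-P(m_{r_1})P(m_{r_2})P(m_b)P(m_\Gamma)$, and each factor $1-P(m)=\sqrt{m}(1-1/\sqrt{m})^m$ decays faster than any polynomial, so the failure probabilities are summable along any sequence with $m_{r_1}\to\infty$. Hence with probability one the bound $\text{Loss}^{\text{PINN}}(u_{1,\bm{m}},u_{2,\bm{m}};\bm{\lambda})=\mathcal{O}(m_{r_1}^{-1/d})\to 0$ holds for all sufficiently large $m_{r_1}$. Since $\text{Loss}^{\text{PINN}}$ is a sum of five nonnegative $L^2$ squared-residual terms with fixed positive weights $\bm{\lambda}$, the convergence of each residual in the appropriate $L^2$ space follows.

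The main obstacle is bookkeeping: aligning the various exponents so that after multiplying by $C_{\bm{m}}$ every term collapses to the single rate $\mathcal{O}(m_{r_1}^{-1/d})$. This is where the specific scaling in $m_{r_2},m_\Gamma,m_b$ assumed in the theorem, the specific form of $\bm{\hat{\lambda}}_{\bm{m}}^R$ chosen in Lemma \ref{lem:gen}, and the precise rate in Assumption \ref{assumption:convergence}(3) are all designed to cooperate, so the main care is in verifying that each piece indeed matches and no worse term appears.
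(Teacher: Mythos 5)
Your proposal is correct and follows essentially the same route as the paper's proof: applying Lemma \ref{lem:gen} to the minimizer, comparing against $\hat{u}_{1,\bm{m}},\hat{u}_{2,\bm{m}}$ via the minimizing property and Assumption \ref{assumption:convergence}(3)--(4), and the exponent bookkeeping with $C_{\bm{m}}=\mathcal{O}(m_{r_1}^{1/2})$ are all the same. The only difference is that you make the passage from ``with probability at least $P$'' to ``with probability $1$'' explicit via Borel--Cantelli, a step the paper leaves implicit.
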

\begin{proof}
Since $m_{r_1}=\mathcal{O}(m_{r_2})= \mathcal{O}(m_{b}^{\frac{d}{d-1}})=\mathcal{O}(m_{\Gamma}^{\frac{d}{d-1}})$, we have
\begin{equation*}
\hat{\lambda}_{r_1,\bm{m}}^R,\ \hat{\lambda}_{r_2,\bm{m}}^R,\ \hat{\lambda}_{b,\bm{m}}^R, \ \hat{\lambda}_{\Gamma_D,\bm{m}}^R,\ \hat{\lambda}_{\Gamma_N,\bm{m}}^R =\mathcal{O}(m_{r_1}^{-\frac{1}{2}-\frac{1}{d}}),
\end{equation*}
where $\hat{\lambda}_{r_1,\bm{m}}^R,\ \hat{\lambda}_{r_2,\bm{m}}^R,\ \hat{\lambda}_{b,\bm{m}}^R, \ \hat{\lambda}_{\Gamma_D,\bm{m}}^R,\ \hat{\lambda}_{\Gamma_N,\bm{m}}^R$ are defined in Lemma \ref{lem:gen}. Let $\bm{\lambda}$ be a vector independent of $\bm{m}$
and $\bm{\lambda}_{\bm{m}}^R = (\lambda_{r_1,\bm{m}}^R,\ \lambda_{r_2,\bm{m}}^R,\
\lambda_{b,\bm{m}}^R, \ \lambda_{\Gamma_D,\bm{m}}^R,\ \lambda_{\Gamma_N,\bm{m}}^R)$
be a vector satisfying
\begin{equation*}
    \bm{\lambda}_{\bm{m}}^R \ge \bm{\hat{\lambda}}_{\bm{m}}^R, \qquad
    \norm{\bm{\lambda}_{\bm{m}}^R}_\infty = \mathcal{O}\left(\norm{\bm{\hat{\lambda}}_{\bm{m}}^R}_\infty\right),
\end{equation*}
where $\bm{\hat{\lambda}}_{\bm{m}}^R = (\hat{\lambda}_{r_1,\bm{m}}^R,\ \hat{\lambda}_{r_2,\bm{m}}^R,\ \hat{\lambda}_{b,\bm{m}}^R, \ \hat{\lambda}_{\Gamma_D,\bm{m}}^R,\ \hat{\lambda}_{\Gamma_N,\bm{m}}^R)$. Let $(u_{1,\bm{m}},u_{2,\bm{m}}) \in (\mathcal{H}_{1,\bm{m}}, \mathcal{H}_{2,\bm{m}})$ minimizes the Lipschitz regularized loss $\text{Loss}_{m}(\cdot;\bm{\lambda}, \bm{\lambda}_{\bm{m}}^R)$ \eqref{def: GE LIP PINN empirical loss}. Let $(\hat{u}_{1,\bm{m}},\hat{u}_{2,\bm{m}})$ be the neural networks defined in the third term of Assumption~\ref{assumption:convergence}, i.e., they satisfy $\text{Loss}_{\bm{m}}^{\text{PINN}}(\hat{u}_{1,\bm{m}},\hat{u}_{2,\bm{m}};\bm{\lambda}) = \mathcal{O}(m_{r_1}^{-\frac{1}{2}-\frac{1}{d}})$.

Then, we have
\begin{equation*}
\begin{aligned}
\text{Loss}_{\bm{m}}(u_{1,\bm{m}},u_{2,\bm{m}};\bm{\lambda}, \bm{\lambda}_{\bm{m}}^R)
\leq& \text{Loss}_{\bm{m}}(\hat{u}_{1,\bm{m}},\hat{u}_{2,\bm{m}};\bm{\lambda}, \bm{\lambda}_{\bm{m}}^R)
\\
\leq& \norm{\bm{\lambda}_{\bm{m}}^R}_\infty\big(R_{r_1}(\hat{u}_{1,\bm{m}}) + R_{r_2}(\hat{u}_{2,\bm{m}}) + R_{\Gamma_D}(\hat{u}_{2,\bm{m}}, \hat{u}_{1,\bm{m}}) + R_{\Gamma_N}(\hat{u}_{2,\bm{m}}, \hat{u}_{1,\bm{m}}) \\  &+R_{b}(u_{2,\bm{m}})\big)
+ \text{Loss}_{\bm{m}}(\hat{u}_{1,\bm{m}},\hat{u}_{2,\bm{m}};\bm{\lambda}, 0).
\end{aligned}
\end{equation*}
Let
\begin{equation*}
\hat{R} = \sup_{\bm{m}} \left(R_{r_1}(\hat{u}_{1,\bm{m}}) + R_{r_2}(\hat{u}_{2,\bm{m}}) + R_{\Gamma_D}(\hat{u}_{1,\bm{m}}, \hat{u}_{2,\bm{m}}) + R_{\Gamma_N}(\hat{u}_{1,\bm{m}}, \hat{u}_{2,\bm{m}}) + R_{b}(\hat{u}_{2,\bm{m}}) \right).
\end{equation*}
Note that $\text{Loss}_{\bm{m}}^{\text{PINN}}(\hat{u}_{1,\bm{m}},\hat{u}_{2,\bm{m}};\bm{\lambda}) =\text{Loss}_{\bm{m}}(\hat{u}_{1,\bm{m}},\hat{u}_{2,\bm{m}};\bm{\lambda}, 0)$ and $\norm{\bm{\lambda}_{\bm{m}}^R}_\infty = \mathcal{O}(\norm{\bm{\hat{\lambda}}_{\bm{m}}^R}_\infty) = \mathcal{O}(m_{r_1}^{-\frac{1}{2}-\frac{1}{d}})$. By the last term in Assumption \ref{assumption:convergence}, we have $\hat{R} < \infty$.
Therefore,
\begin{equation*}
    \text{Loss}_{\bm{m}}(u_{1,\bm{m}}, u_{2,\bm{m}};\bm{\lambda},\bm{\lambda}_{\bm{m}}^R) = \mathcal{O}(m_{r_1}^{-\frac{1}{2}-\frac{1}{d}}).
\end{equation*}

According to Lemma \ref{lem:gen}, with probability at least
\begin{equation*}
P = P(m_{r_1})P(m_{r_2})P(m_{b})P(m_{\Gamma}), \quad\text{where } P(m) = (1 - \sqrt{m}(1-1/\sqrt{m})^{m}),
\end{equation*}
we have that
\begin{equation*}
\begin{aligned}
\text{Loss}^{\text{PINN}}(u_{1,\bm{m}},u_{2,\bm{m}};\bm{\lambda}) \leq& C_{\bm{m}}\cdot \text{Loss}_{\bm{m}}(u_{1,\bm{m}},u_{2,\bm{m}};\bm{\lambda}, \bm{\hat{\lambda}}_{\bm{m}}^R)
+C'(m_{r_1}^{-\frac{1}{d}} +  m_{r_2}^{-\frac{1}{d}} + m_{b}^{-\frac{1}{d-1}}+m_{\Gamma}^{-\frac{1}{d-1}})
=\mathcal{O}(m_{r_1}^{-\frac{1}{d}}),
\end{aligned}
\end{equation*}
which completes the first part of the proof. Here, we have used the fact that $m_{r_2} = \mathcal{O}(m_{r_1})$, $m_{\Gamma} = \mathcal{O}(m_{r_1}^{\frac{d-1}{d}})$, $m_{b} = \mathcal{O}(m_{r_1}^{\frac{d-1}{d}})$ and $C_{\bm{m}}=\mathcal{O}(m_{r_1}^{\frac{1}{2}})$.

In addition, by the first part of the Lemma,  we have that the probability of $$\lim_{m_{r_1} \to \infty} \text{Loss}(u_{1,\bm{m}},u_{2,\bm{m}};\bm{\lambda},\bm{0}) = 0$$ is one. Consequently, with probability one over iid samples,
\begin{align*}
0 &=\lim_{m_{r_1} \to \infty} \text{Loss}(u_{1,\bm{m}},u_{2,\bm{m}};\bm{\lambda},\bm{0})\\
&= \lim_{m_{r_1} \to \infty} \Big(\lambda_{r_1} \int_{\Omega_1} \left|\mathcal{L}_1[u_{1,{\bm{m}}}](\mathbf{x}_{r_1}) - f_1(\mathbf{x}_{r_1}) \right|^2d\mu_{r_1}(\mathbf{x}_{r_1}) +  \lambda_{r_2} \int_{\Omega_2} \left|\mathcal{L}_2[u_{2,{\bm{m}}}](\mathbf{x}_{r_2}) - f_2(\mathbf{x}_{r_2}) \right|^2d\mu_{r_2}(\mathbf{x}_{r_2})\\
&\qquad\quad + \lambda_{b}\int_{\partial\Omega} \norm{\mathcal{B}[u_{2,\bm{m}}](\mathbf{x}_{b}) - \bm{g}(\mathbf{x}_{b}) }_2^2 d\mu_{b}(\mathbf{x}_{b})+\lambda_{\Gamma_D}\int_{\Gamma} \norm{\mathcal{I}_D[u_{1,\bm{m}},u_{2,\bm{m}}](\mathbf{x}_{\Gamma}) - \bm{\varphi}(\mathbf{x}_{\Gamma}) }_2^2 d\mu_{\Gamma}(\mathbf{x}_{\Gamma})\\
&\qquad\quad +\lambda_{\Gamma_N}\int_{\Gamma} \norm{\mathcal{I}_N[u_{1,\bm{m}},u_{2,\bm{m}}](\mathbf{x}_{\Gamma}) - \bm{\psi}(\mathbf{x}_{\Gamma}) }_2^2 d\mu_{\Gamma}(\mathbf{x}_{\Gamma})\Big).
\end{align*}
Since $\bm{\lambda} = (\lambda_{r_1}, \lambda_{r_2}, \lambda_{b}, \lambda_{\Gamma_D}, \lambda_{\Gamma_N})\geq 0$, we obtain that
\begin{equation*}
\begin{aligned}
&\lim_{m_{r_1} \to \infty} \int_{\Omega_1} \left|\mathcal{L}_1[u_{1,{\bm{m}}}](\mathbf{x}_{r_1}) - f_1(\mathbf{x}_{r_1}) \right|^2d\mu_{r_1}(\mathbf{x}_{r_1}) = 0,
\
\lim_{m_{r_1} \to \infty}\int_{\Omega_2} \left|\mathcal{L}_2[u_{2,{\bm{m}}}](\mathbf{x}_{r_2}) - f_2(\mathbf{x}_{r_2}) \right|^2d\mu_{r_2}(\mathbf{x}_{r_2})=0,\\
&\lim_{m_{r_1} \to \infty} \int_{\partial\Omega} \norm{\mathcal{B}[u_{2,\bm{m}}](\mathbf{x}_{b}) - \bm{g}(\mathbf{x}_{b}) }_2^2 d\mu_{b}(\mathbf{x}_{b})=0,
\
\lim_{m_{r_1} \to \infty} \int_{\Gamma} \norm{\mathcal{I}_D[u_{1,\bm{m}},u_{2,\bm{m}}](\mathbf{x}_{\Gamma}) - \bm{\varphi}(\mathbf{x}_{\Gamma}) }_2^2 d\mu_{\Gamma}(\mathbf{x}_{\Gamma})=0,\\
&\lim_{m_{r_1} \to \infty} \int_{\Gamma} \norm{\mathcal{I}_N[u_{1,\bm{m}},u_{2,\bm{m}}](\mathbf{x}_{\Gamma}) - \bm{\psi}(\mathbf{x}_{\Gamma}) }_2^2 d\mu_{\Gamma}(\mathbf{x}_{\Gamma})=0.
\end{aligned}
\end{equation*}
Therefore, we conclude that $\mathcal{L}_1[u_{1,m}] \to f_1$ in $L^2(\Omega_1;\mu_{r_1})$, $\mathcal{L}_2[u_{2,m}] \to f_2$ in $L^2(\Omega_2;\mu_{r_2})$, $\mathcal{B}[u_{2,m}] \to \bm{g}$ in $L^2(\partial\Omega;\mu_{b})$, $\mathcal{I}_D[u_{1,m},u_{2,m}] \to \bm{\varphi}$ in $L^2(\Gamma;\mu_{\Gamma})$ and $\mathcal{I}_N[u_{1,m},u_{2,m}] \to \bm{\psi}$
in $L^2(\Gamma;\mu_{\Gamma})$ as $m_{r_1} \to \infty$.
\end{proof}

Finally, to complete the proof, it is sufficient to present the following estimate for the interface problem \eqref{eq:interface problem}. For convenience, we denote
 $X=H^2(\Omega_1)\cap H^2(\Omega_2)$
and define
$$\norm{u}_X = \norm{u}_{H^2(\Omega_1)} + \norm{u}_{H^2(\Omega_2)},\  \forall u\in \ X.$$
\begin{lemma}\label{lem:bound of solu}
Assume that $\varphi \in H^{2}(\Gamma)$, $\ \psi \in H^{1}(\Gamma)$, $g \ \in H^{2}(\partial \Omega)$, $f_i\in L^2(\Omega), \ i=1,2$. Then the problem (\ref{eq:interface problem}) has a unique solution $u\in X$ and $u$ satisfies the estimate:
\begin{equation*}
\norm{u}_X \leq C \left(\norm{f}_{L^2(\Omega)} + \norm{g}_{H^{2}(\partial \Omega)} + \norm{\varphi}_{H^{2}(\Gamma)} + \norm{\psi}_{H^{1}(\Gamma)}\right).
\end{equation*}
\end{lemma}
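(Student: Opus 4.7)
The strategy is to reduce the inhomogeneous interface problem to one with trivial Dirichlet boundary and zero Dirichlet jump data, solve the reduced problem by Lax--Milgram in $H^1_0(\Omega)$, and then upgrade the weak $H^1$ solution to $H^2(\Omega_1)\cap H^2(\Omega_2)$ using classical interior/boundary regularity together with the smoothness of $\Gamma$ and the Neumann jump. Uniqueness will fall out of the coercivity of the associated bilinear form. The standing assumptions implicit in the problem are $a\ge a_{\min}>0$ and $b\ge 0$ with $a_i$ Lipschitz on each subdomain and $\Gamma,\partial\Omega$ at least $C^2$.

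First I would construct lifts $G,\Phi\in H^2(\Omega_2)$ via the trace theorem: choose $G$ with $G|_{\partial\Omega}=g$, $G|_\Gamma=0$, $\partial_{\mathbf n}G|_\Gamma=0$ and $\norm{G}_{H^2(\Omega_2)}\le C\norm{g}_{H^2(\partial\Omega)}$, and $\Phi$ with $\Phi|_\Gamma=\varphi$, $\Phi|_{\partial\Omega}=0$, $\partial_{\mathbf n}\Phi|_\Gamma=0$ and $\norm{\Phi}_{H^2(\Omega_2)}\le C\norm{\varphi}_{H^2(\Gamma)}$. These exist because $\partial\Omega$ and $\Gamma$ are disjoint smooth manifolds. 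Extending $G,\Phi$ by zero into $\Omega_1$ and setting $\tilde u_0=G+\Phi$, the shifted unknown $w=u-\tilde u_0$ then satisfies the transmission problem
\begin{equation*}
-\nabla\!\cdot\!(a\nabla w)+bw=\tilde f\ \text{in }\Omega_i,\quad
\llbracket w\rrbracket=0,\ \ \llbracket a\nabla w\cdot\mathbf n\rrbracket=\psi\ \text{on }\Gamma,\quad
w=0\ \text{on }\partial\Omega,
\end{equation*}
with $\tilde f\in L^2(\Omega)$ and $\norm{\tilde f}_{L^2(\Omega)}\le C(\norm{f}_{L^2(\Omega)}+\norm{g}_{H^2(\partial\Omega)}+\norm{\varphi}_{H^2(\Gamma)})$; the Neumann jump data is unchanged because $\partial_{\mathbf n}\tilde u_0|_\Gamma=0$.

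Next, since $\llbracket w\rrbracket=0$ on $\Gamma$ and $w=0$ on $\partial\Omega$, the function $w$ lies in $V:=H^1_0(\Omega)$. Testing against $v\in V$, integrating by parts on $\Omega_1$ and $\Omega_2$ separately, and using the sign convention $\mathbf n_2=-\mathbf n$ to combine the two boundary terms on $\Gamma$ into the Neumann jump yields the weak formulation
\begin{equation*}
B(w,v):=\int_\Omega a\nabla w\cdot\nabla v+\int_\Omega bwv
=\int_\Omega \tilde f\,v+\int_\Gamma \psi v\,dS=:\ell(v),\qquad \forall v\in V.
\end{equation*}
Under the coefficient assumptions, $B$ is continuous and coercive on $V$ via Poincar\'e, while $\ell$ is continuous because $v|_\Gamma\in H^{1/2}(\Gamma)\hookrightarrow L^2(\Gamma)$ and $\psi\in H^1(\Gamma)\subset L^2(\Gamma)$. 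Lax--Milgram therefore delivers a unique weak solution $w\in V$ with $\norm{w}_{H^1(\Omega)}\le C(\norm{\tilde f}_{L^2(\Omega)}+\norm{\psi}_{L^2(\Gamma)})$, which also yields uniqueness in $X$: any two solutions of the original problem differ by a $V$-function satisfying the homogeneous variational equation, hence vanish.

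The main obstacle is promoting this $H^1$ weak solution to $H^2(\Omega_1)\cap H^2(\Omega_2)$ with the stated estimate. Because $\Gamma$ and $\partial\Omega$ are smooth and $a_i$ is Lipschitz on $\overline{\Omega_i}$, I would work locally: away from $\Gamma$, standard interior and Dirichlet $H^2$ regularity on each $\Omega_i$ give $H^2$ estimates in terms of $\norm{\tilde f}_{L^2(\Omega_i)}$. Near $\Gamma$, straighten the interface by a smooth diffeomorphism so that the transmission conditions become $\llbracket w\rrbracket=0$ and $\llbracket a\partial_d w\rrbracket=\psi$ across a flat hyperplane; then apply tangential difference quotients to the weak formulation to gain one tangential derivative of $w$ uniformly on each side (using $\psi\in H^1(\Gamma)$), and recover the missing $\partial_d^2 w$ on each side directly from the PDE. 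This is the classical transmission regularity argument; patching the local estimates yields
\begin{equation*}
\norm{w}_{H^2(\Omega_1)}+\norm{w}_{H^2(\Omega_2)}\le C\bigl(\norm{\tilde f}_{L^2(\Omega)}+\norm{\psi}_{H^1(\Gamma)}\bigr).
\end{equation*}
Undoing the substitution $u=w+\tilde u_0$ and combining with the lifting bounds for $G,\Phi$ produces the final estimate. The delicate point is controlling constants in the patching step so that they remain uniform and depend only on $a,b,\Omega,\Gamma$, which is where the smoothness of $\Gamma$ and the Lipschitz continuity of $a$ are used most critically.
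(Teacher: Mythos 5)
Your proposal is correct, but it takes a genuinely different route from the paper. The paper lifts \emph{all three} pieces of data at once: it solves a harmonic Dirichlet problem in $\Omega_1$ with trace $-\varphi$ on $\Gamma$, then a biharmonic problem in $\Omega_2$ whose clamped-type boundary conditions simultaneously prescribe the trace $g$ on $\partial\Omega$ and a normal derivative on $\Gamma$ chosen so that the resulting piecewise function $\tilde u$ carries the full jump data $\llbracket \tilde u\rrbracket=\varphi$, $\llbracket a\nabla\tilde u\cdot\mathbf n\rrbracket=\psi$; the remainder $v=u-\tilde u$ then solves the \emph{fully homogeneous} transmission problem, whose $H^2$ well-posedness is cited as a black box from the Babu\v{s}ka/Kellogg literature. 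You instead lift only the Dirichlet-type data ($\varphi$ and $g$) by trace-theorem extensions supported in $\Omega_2$ with vanishing normal derivative on $\Gamma$, deliberately leave the Neumann jump $\psi$ in place, absorb it as the natural surface term $\int_\Gamma\psi v$ in a Lax--Milgram formulation on $H^1_0(\Omega)$, and then prove the piecewise $H^2$ regularity yourself via interface flattening and tangential difference quotients. Your route is more self-contained and makes the mechanism transparent (uniqueness from coercivity; $\psi$ enters as a functional on $H^1_0$ whose extra tangential regularity is exactly what drives the $H^2$ gain), at the price of having to execute the classical transmission-regularity argument that the paper simply cites; the paper's route is shorter but requires the somewhat unusual biharmonic lift to encode the Neumann jump. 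Two remarks: your regularity step is only sketched, but it is the standard argument and you correctly identify where $\psi\in H^1(\Gamma)$ (in fact $H^{1/2}(\Gamma)$ suffices), the smoothness of $\Gamma$, and the Lipschitz continuity of $a_i$ are used; and your explicitly stated standing assumptions ($a\ge a_{\min}>0$, $b\ge 0$, $a_i$ Lipschitz) are also needed, tacitly, by the paper's proof --- indeed coercivity is not automatic for the paper's own Example with $b_1=b_2=-1$, so making them explicit is a point in your favor.
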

\begin{proof}
Let $\tilde{u}_1$ solve
\begin{equation*}
\begin{aligned}
-\Delta \tilde{u}_1=0,\ \text{in}\ \Omega_1,\\
\ \tilde{u}_1= -\varphi\ \text{on}\ \Gamma.
\end{aligned}
\end{equation*}
We know $\tilde{u}_1$ exists and  $\tilde{u}_1\in H^2(\Omega_1)$ satisfying (cf. Grisvard \cite{grisvard2011elliptic})
\begin{equation*}
\norm{\tilde{u}_1}_{H^2(\Omega_1)}\leq c \norm{\varphi}_{H^{3/2}(\Gamma)}.
\end{equation*}
Let $\tilde{u}_2$ solve
\begin{equation*}
\begin{aligned}
&-\Delta^2\tilde{u}_2 = 0,\ \text{in}\ \Omega_2,\\
& \tilde{u}_2 = 0,\ \frac{\partial \tilde{u}_2}{\partial n} = \frac{a_1}{a_2} \frac{\partial \tilde{u}_1}{\partial n} +\frac{\psi}{a_2} \ \text{on}\ \Gamma, \\
& \tilde{u}_2 = g,\ \frac{\partial \tilde{u}_2}{\partial \nu} =0\ \text{on}\ \partial \Omega.
\end{aligned}
\end{equation*}
We know $\tilde{u}_2$ exists and  $\tilde{u}_2\in H^2(\Omega_2)$ satisfying (cf. Girault-Raviart \cite{girault2012finite}, pp.15-17)
\begin{equation*}
\begin{aligned}
\norm{\tilde{u}_2}_{H^2(\Omega_2)} \leq& C\left(\norm{g}_{H^{3/2}(\partial\Omega)} + \norm{\frac{\partial \tilde{u}_1}{\partial n}}_{H^{1/2}(\Gamma)} + \norm{\psi}_{H^{1/2}(\Gamma)}\right)\\
\leq& C\left(\norm{g}_{H^{3/2}(\partial\Omega)} +  \norm{\tilde{u}_1}_{H^2(\Omega_1)}+ \norm{\psi}_{H^{1/2}(\Gamma)}\right)\\
\leq & C\left(\norm{g}_{H^{3/2}(\partial\Omega)} + \norm{\varphi}_{H^{3/2}(\Gamma)}+ \norm{\psi}_{H^{1/2}(\Gamma)}\right),
\end{aligned}
\end{equation*}
where $C$ is a generic constant that depends on $\Omega, a_1, a_2$. Let
\begin{equation*}
\tilde{u}(x) = \left\{\begin{aligned}
&\tilde{u}_1(x), \ x\in \Omega_1,\\
&\tilde{u}_2(x), \ x\in \Omega_2.\\
\end{aligned}\right.
\end{equation*}
Obviously, $\tilde{u}(x)\in X$. In addition, by \cite{babuvska1970finite,bruce1974poisson} we know that the equation
\begin{equation*}
\begin{aligned}
 -\nabla\cdot(a_i\nabla v)+b_i v &=  f_i +\nabla\cdot(a_i\nabla \tilde{u}_i)-b_i \tilde{u}_i,\quad \text{in} \ \Omega_i, \ i=1,2,\\
 \llbracket a\nabla v\cdot \mathbf{n} \rrbracket &=0, \quad  \text{on} \ \Gamma , \\
\llbracket v\rrbracket &=0,\quad  \text{on} \ \Gamma ,\\
v&=0,\quad \text{on} \ \partial\Omega,
\end{aligned}
\end{equation*}
has a unique solution $v\in X$ and $v$ satisfies the estimate
\begin{equation*}
\begin{aligned}
\norm{v}_X \leq& C\left(\norm{f_1 +\nabla\cdot(a_1\nabla \tilde{u}_1)-b_1 \tilde{u}_1}_{L^2(\Omega_1)} + \norm{f_2 +\nabla\cdot(a_2\nabla \tilde{u}_2)-b_2 \tilde{u}_2 }_{L^2(\Omega_2)}\right)\\
\leq & C\left(\norm{f_1}_{L^2(\Omega_1)}+ \norm{f_2}_{L^2(\Omega_2)}+\norm{\tilde{u}_1}_{H^2(\Omega_1)} + \norm{\tilde{u}_2}_{H^2(\Omega_2)}\right).
\end{aligned}
\end{equation*}
Finally, we obtain that $u = v+\tilde{u}$ solves the problem (\ref{eq:interface problem}) and that
\begin{equation*}
\begin{aligned}
\norm{u}_X \leq& \norm{v}_X + \norm{\tilde{u}}_X \\
\leq& C\left(\norm{f_1}_{L^2(\Omega_1)}+ \norm{f_2}_{L^2(\Omega_2)}+\norm{\tilde{u}}_X\right)\\
\leq &C\left(\norm{f_1}_{L^2(\Omega_1)}+ \norm{f_2}_{L^2(\Omega_2)}+\norm{g}_{H^{3/2}(\partial\Omega)} + \norm{\varphi}_{H^{3/2}(\Gamma)}+ \norm{\psi}_{H^{1/2}(\Gamma)}\right)\\
\leq &C\left(\norm{f_1}_{L^2(\Omega_1)}+ \norm{f_2}_{L^2(\Omega_2)}+\norm{g}_{H^{2}(\partial\Omega)} + \norm{\varphi}_{H^{2}(\Gamma)}+ \norm{\psi}_{H^{1}(\Gamma)}\right),
\end{aligned}
\end{equation*}
where $C$ is a generic constant. The proof is completed.
\end{proof}

With these results, we are able to provide the proof of Theorem \ref{thm:main-elliptic}.
\begin{proof}[proof of Theorem \ref{thm:main-elliptic}]
Lemma \ref{lem:bound of solu} implies the existence and the uniqueness of solution $u^{*}$. Let $$(u_{1, \bm{m}}, u_{2, \bm{m}}) \in (\mathcal{H}_{1,\bm{m}},\mathcal{H}_{2,\bm{m}})$$ be a minimizer of the Lipschitz regularized loss $\text{Loss}_{\bm{m}}(\cdot;\bm{\lambda}, \bm{\lambda}_{\bm{m}}^R)$ \eqref{def: GE LIP PINN empirical loss}. Again, by Lemma \ref{lem:bound of solu}, we have that
\begin{equation*}
\begin{aligned}
\norm{u_{1, \bm{m}} - u^*}_{H^2(\Omega_1)} + \norm{u_{2, \bm{m}} - u^*}_{H^2(\Omega_2)} \leq C \Big(&\norm{\mathcal{L}_1[u_{1,m}] - f_1}_{L^2(\Omega_1)} +
\norm{\mathcal{L}_2[u_{2,m}] - f_2}_{ L^2(\Omega_2)}\\
&+\norm{u_{2,m} - g}_{H^2(\partial\Omega)}
+\norm{u_{2,\bm{m}} -u_{1,\bm{m}} -\varphi}_{H^2(\Gamma)}\\
&+
\norm{a\nabla u_{2,\bm{m}}\cdot \mathbf{n} - a\nabla u_{1,\bm{m}}\cdot \mathbf{n}- \psi}_{H^2(\Gamma)}
\Big).
\end{aligned}
\end{equation*}

Finally, by the second term of Lemma \ref{lem:conv-loss}, we conclude that with probability one over iid samples,
\begin{equation*}
\lim_{m_{r_1} \to \infty} u_{1, \bm{m}} = u^*,\quad \text{in } H^2(\Omega_1), \qquad \lim_{m_{r_1} \to \infty} u_{2, \bm{m}} = u^*,\quad \text{in } H^2(\Omega_2),
\end{equation*}
which completes the proof.
\end{proof}

\section{Summary}\label{summary}
The main contribution of this paper is to perform the convergence analysis of the neural network method for solving linear second-order elliptic interface problems. It is proved that the neural network sequence converges to the unique solution to the interface problem in $H^2$. Numerical results are presented to show agreement with the theoretical findings. This result advanced the mathematical foundations of the deep learning-based solver of PDEs.

To complete the proof, we first derive a Lipschitz regularized empirical loss from the probabilistic space filling arguments \cite{calder2019consistency} to bound the expected PINN loss and then show that the expected PINN loss at the minimizers of the Lipschitz regularized empirical loss converges to zero. Finally, we demonstrate that the minimizers of the Lipschitz regularized empirical losses converge to the solution to the interface problem uniformly as the number of training samples grows in $H^2$ and conclude the main theorem.

There are several interesting further research directions. The landscape of non-convex objective functions and the stochastic gradient optimization process remain open. We would like to further investigate such problems and quantify the optimization error of solving elliptic interface problems using neural networks in the future. In addition, further error analysis to provide a more restrictive error bound is another interesting direction.

\section*{Acknowledgments}
The author would like to thank Professor Hehu Xie for valuable discussions. This work was supported by the National Natural Science Foundation of China (Grant Nos. 11771435, 22073110 and 12171466).


\bibliography{main}
\end{document}